\newcommand{\set}[1]{\left\lbrace #1\right\rbrace}
\providecommand{\abs}[1]{\left\lvert#1\right\rvert}
\newcommand{\qtq}[1]{\quad\text{#1}\quad}
\newcommand{\remove}[1]{ }
\newtheorem{theorem}{Theorem}[section]
\newtheorem{proposition}[theorem]{Proposition}
\newtheorem{lemma}[theorem]{Lemma}
\newtheorem{corollary}[theorem]{Corollary}
\newtheorem{remark}[theorem]{Remark}
\newtheorem{remarks}[theorem]{Remarks}
\newtheorem*{theorem*}{Theorem}\numberwithin{equation}{section}
\long\def\salta#1\finqui{\relax}                           
\long\def\verde#1\basta{ {\color{red}#1} }
\begin{document}

\title{Cantor type functions in non-integer bases} 
\author[C. Baiocchi]{Claudio Baiocchi} 
\email{bici.nando@gmail.com}
\author[V. Komornik]{Vilmos Komornik}
\remove{\address{16 rue de Copenhague\\
         67000 Strasbourg, France}}
\email{vilmos.komornik@gmail.com}
\author[P. Loreti]{Paola Loreti}
\address{Sapienza Universit\`a di Roma,
Dipartimento di Scienze di Base e Applicate per l'Ingegne\-ria,
via A. Scarpa n. 16,
00161 Roma, Italy}
\email{paola.loreti@sbai.uniroma1.it}
\date{Version of 2016-05-04-a}

\begin{abstract}
Cantor's ternary function is generalized to arbitrary base-change functions in non-integer bases. 
Some of them share the curious properties of Cantor's function, while others behave quite differently.
\end{abstract}
\maketitle


\section{Introduction}\label{s1}

Cantor's celebrated ternary function has been constructed by combining expansions of real numbers in two different bases $p=3$ and $q=2$.
This function has the surprising property to be a non-constant, continuous and non-decreasing function, having a zero derivative almost everywhere.
We refer to \cite{HilleTamarkin1929} for a survey of this and various other interesting features of Cantor's  function.

The purpose of this paper is to put Cantor's construction into a general framework by considering arbitrary bases $p,q>1$.
It turns out that for some values of $p$ and $q$ these functions have similar properties, while for other values they exhibit a quite different behavior.

Given a real \emph{base} $p>1$, by an \emph{expansion} of a real number $x$ in base $p$ we mean a sequence $c=(c_i)\in\set{0,1}^{\infty}$ satisfying the equality 
\begin{equation}\label{11}
\pi_p(c):=\sum_{i=1}^{\infty}\frac{c_i}{p^i}=x.
\end{equation}
We denote by $J_p$ the set of numbers $x$ having at least one expansion.
It is clear that $J_p\subseteq [0,\frac{1}{p-1}]$.

For $p=2$ the definition reduces to the familiar binary expansions of the numbers $x\in [0,1]$.
If $x\in (0,1)$ is a binary rational number, then it has two expansions: one ending with $0^{\infty}$, and another one ending with $1^{\infty}$.\footnote{We apply the notations of symbolic dynamics, i.e, $d^{\infty}$ denotes the constant sequence $d,d,\ldots ,$ and $10^{\infty}$ denotes the sequence $1,0,0,\ldots .$}
The remaining numbers in $[0,1]$ have a unique expansion.

The case $1<p<2$ was first investigated by R\'enyi \cite{Ren1957}.
He has proved among others the equality $J_p=[0,\frac{1}{p-1}]$  for each $p\in (1,2]$ by applying a greedy algorithm for each $x\in [0,\frac{1}{p-1}]$ as follows.
If $b_i$ has already been defined for all $i<n$ (no assumption if $n=1$), then we set $b_n=1$ if 
\begin{equation}\label{12}
\left(\sum_{i=1}^{n-1} \frac{b_i}{p^i}\right) +\frac{1}{p^n}\le x,
\end{equation}
and $b_n=0$ otherwise. 
Then $b_p(x):=(b_i)$ is an expansion of $x$ in base $p$.\footnote{Sometimes we write $b_i(x,p)$ instead of $b_i$  to show the dependence on $x$ and $p$.}
By construction this is the lexicographically largest expansion of $x$ in base $p$, called the \emph{greedy} or \emph{$\beta$-expansion} of $x$ in base $p$. 

Today there is a huge literature devoted to non-integer expansions.
For example, probabilistic and ergodic aspects are investigated in
\cite{DajDev2007}, \cite{DajKra2002}, \cite{Ren1957}, \cite{Sid2003}, \cite{SidVer1998},
combinatorial properties in 
\cite{BaiKom2007}, \cite{DajDevKomLor2012}, \cite{Gelfond1959}, \cite{KomLor2007b}, \cite{KomLor2010}, \cite{Par1960},
unique expansions in
\cite{DarKat1993}, \cite{DarKat1995}, \cite{Dev2008}, \cite{DevKom2009}, \cite{DevKom2011}, \cite{DevKomLor2016}, \cite{ErdHorJoo1991}, \cite{ErdJooKom1990}, \cite{ErdKom1998}, \cite{KomLor1998}, \cite{KomLor1999}, \cite{KomLor2002}, \cite{KomLor2007}, \cite{KomLorPet2003},
and control-theoretical applications are given in
\cite{ChiPic2001}, \cite{LaiLor2012}, \cite{LaiLor2015}.

Many other references are given in the surveys \cite{DevKom2016}, \cite{Kom2011}, \cite{Sid2010}.

The situation for $1<p<2$ is quite different from that of $p=2$.
For example, while in base $p=2$ each $x\in [0,1]$ has one or two expansions, in the bases $1<p<2$ almost every $x\in J_p$ has $2^{\aleph_0}$ expansions by a theorem of Sidorov \cite{Sid2003}.

Another difference is that the functions $x\mapsto b_p(x)$ of $J_p$ into $\set{0,1}^{\infty}$ are not monotone if $1<p<2$, and their behavior depends critically on the value of $p$.
In order to understand their mutual behavior we introduce for $p\in(1,2]$ and $q>1$ the \emph{base-change functions}
\begin{equation*}
b_{p,q}:=\pi_q\circ b_p:J_p\to J_q.
\end{equation*}
Explicitly, we have
\begin{equation}\label{13}
b_{p,q}(x):=\sum_{i=1}^{\infty} \frac{b_i}{q^i}\qtq{with}(b_i)=b_p(x).
\end{equation}
We exclude henceforth the trivial case $p=q$ when we get the identity function on $J_p$.

Now we may state our first result:

\begin{theorem}\label{t11}
Let $p\in(1,2]$ and $q>1$.

\begin{enumerate}[\upshape (i)]
\item The function $b_{p,q}$ is right continuous.
It is left continuous in $x\in J_p$ if and only if $b_p(x)$ contains infinitely many $1$ digits.
Its discontinuities form a countable dense set.
\item If $q<p$, then $b_{p,q}$ is nowhere monotone, not left differentiable anywhere, and not right differentiable in $x$ if the greedy expansion $b_p(x)$ contains at most finitely many non-zero digits.
\item If $q>p$, then $b_{p,q}$ is increasing, and hence differentiable almost everywhere.\end{enumerate}
\end{theorem}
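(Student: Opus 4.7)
The plan is to analyze the one-sided limits of $b_p$ in the product topology on $\set{0,1}^{\infty}$ and transport them through $\pi_q$. The main technical challenge is the clean identification of the left limit $\lim_{y\uparrow x}b_p(y)$ with the quasi-greedy expansion of $x$; once that is in hand, all three parts reduce to careful application of the greedy rule \eqref{12} and Lebesgue's differentiation theorem for monotone functions.

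For (i), the key observation from \eqref{12} is that for each $n$ the set $\set{x:b_n(x,p)=1}$ is a finite union of intervals closed on the left (the defining inequality being non-strict); hence every $b_n(\cdot,p)$ is right-continuous, $b_p$ is right-continuous into $\set{0,1}^{\infty}$, and $b_{p,q}=\pi_q\circ b_p$ is right-continuous. For the left limit I would identify $\lim_{y\uparrow x}b_p(y)=a_p(x)$, the \emph{quasi-greedy} expansion of $x$, i.e.\ the lex-largest expansion not ending in $0^{\infty}$, and note that this equals $b_p(x)$ iff $b_p(x)$ contains infinitely many $1$'s. If $b_p(x)=b_1\cdots b_{n-1}10^{\infty}$, writing $a_p(x)=b_1\cdots b_{n-1}0\alpha_1\alpha_2\cdots$ with $(\alpha_j)$ the quasi-greedy expansion of $1$ in base $p$ (so $\pi_p(\alpha)=1$), a short computation gives the jump
\[
\lim_{y\uparrow x}b_{p,q}(y)-b_{p,q}(x)=q^{-n}\bigl(\pi_q(\alpha)-1\bigr),
\]
which is nonzero for $q\ne p$ because $q\mapsto\pi_q(\alpha)$ is strictly decreasing. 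The discontinuities are thus exactly the finite greedy sums $\sum_{i\le n}b_i/p^i$, which form a countable set, dense in $J_p$ because any $y\in J_p$ is a limit of truncations of $b_p(y)$.

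For (iii), I would first establish lex-monotonicity of $b_p$ from \eqref{12}: if $n$ is the first index with $b_n(x)>b_n(y)$, then \eqref{12} yields $y<\sum_{i<n}b_i/p^i+1/p^n\le x$. So for $x<y$ with first difference at position $n$, $b_n(x)=0$ and $b_n(y)=1$. The condition $b_n(x)=0$ then forces $r_n(x):=x-\sum_{i\le n}b_i(x)/p^i<1/p^n$, equivalently $\sum_{j\ge 1}b_{n+j}(x)/p^j<1$; for $q>p$ this transfers to $\sum_{j\ge 1}b_{n+j}(x)/q^j<1$, and the estimate
\[
b_{p,q}(y)-b_{p,q}(x)\ge q^{-n}-\sum_{i>n}b_i(x)/q^i>0
\]
shows that $b_{p,q}$ is strictly increasing, whence differentiable almost everywhere by Lebesgue's theorem.

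For (ii), the pivotal step is that $b_{p,q}$ is not left-differentiable at any $x\in J_p$. At a discontinuity this is immediate from the existence of a jump; otherwise pick $m_k\uparrow\infty$ with $b_{m_k}(x,p)=1$ and set $z_k:=\sum_{i\le m_k}b_i(x,p)/p^i$ (a point whose greedy expansion is the obvious truncation). Then
\[
\frac{b_{p,q}(z_k)-b_{p,q}(x)}{z_k-x}=\Bigl(\frac{p}{q}\Bigr)^{m_k}\cdot\frac{\sum_{j\ge 1}b_{m_k+j}(x)/q^j}{\sum_{j\ge 1}b_{m_k+j}(x)/p^j}\ge\Bigl(\frac{p}{q}\Bigr)^{m_k}\to\infty,
\]
since $q<p$ makes the ratio at least $1$. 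Because any function monotone on an interval is almost-everywhere left-differentiable there by Lebesgue, $b_{p,q}$ is nowhere monotone. Finally, at a finite-greedy $x=\sum_{i\le n}b_i/p^i$ the shift identity $b_{p,q}(x+h)=b_{p,q}(x)+q^{-n}b_{p,q}(p^nh)$, valid for small $h>0$, reduces non-right-differentiability at $x$ to showing that $b_{p,q}(z)/z\to\infty$ as $z\to 0^+$; this is immediate because $z\in[p^{-(k+1)},p^{-k})$ forces $b_{p,q}(z)\ge q^{-(k+1)}$, giving $b_{p,q}(z)/z\ge(p/q)^k/q\to\infty$.
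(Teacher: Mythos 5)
Your proposal is correct, and parts (i) and (iii) follow essentially the paper's own route: right continuity from the non-strict greedy inequality, the identification of the left limit $\lim_{y\nearrow x}b_p(y)=a_p(x)$ with the resulting jump $q^{-n}(\pi_q(a_p(1))-1)$, and, for $q>p$, the transfer of the Parry condition $\sum_{j}b_{n+j}(x)p^{-j}<1$ to base $q$ (the paper packages this as $R(b_p)\subseteq R(b_q)$ in Proposition \ref{p24} together with the monotonicity of $\pi_q$ on $R(b_q)$). The genuine divergence is in (ii). The paper proves nowhere-monotonicity directly and constructively (Proposition \ref{p31}(iii)): it fixes an auxiliary base $r\in(q,\min\set{p,2})$ and uses $a_r(1)$ to exhibit, inside every interval, three points $x<y<z$ with $b_{p,q}(y)>b_{p,q}(x)$ and $b_{p,q}(y)>b_{p,q}(z)$; non-differentiability is then handled separately (Proposition \ref{p42}). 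You instead prove non-left-differentiability at every point first (truncation points giving $D_-=\infty$ when $b_p(x)$ has infinitely many $1$'s, the left jump when it does not) and then deduce nowhere-monotonicity backwards from Lebesgue's theorem, and your right-hand argument at finite-greedy points goes through the self-similarity identity $b_{p,q}(x+h)=b_{p,q}(x)+q^{-n}b_{p,q}(p^nh)$ rather than the paper's Lemma \ref{l41}(ii). Your route is shorter and unifies part (ii), at the price of being non-constructive and of resting on two steps you should still write out in full: that the shift identity indeed produces greedy expansions for all small $h>0$ (it does, because once the prefix $b_1\cdots b_n$ of $b_p(x+h)$ has stabilized, the lexicographically largest admissible tail is exactly $b_p(p^nh)$), and the identification $\lim_{y\nearrow x}b_p(y)=a_p(x)$ itself, which you flag but do not prove; the paper obtains it from the sandwich $a_p(y)\le b_p(y)<a_p(x)\le b_p(x)$ for $y<x$ combined with the left continuity of the quasi-greedy map (Proposition \ref{p26}).
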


Dar\'oczy and K\'atai \cite{DarKat1993}, \cite{DarKat1995} have introduced a slight variant of the $\beta$-expansions where the inequality ``$\le$'' in \eqref{12} is changed to the strict inequality ``$<$''.
More precisely, for $x=0$ we set $a_p(x):=0^{\infty}$. 
For $x\in(0,\frac{1}{p-1}]$ we define a sequence $a_p(x):=(a_i)$ as follows. 
If $a_i$ has already been defined for all $i<n$ (no assumption if $n=1$), then we set $a_n=1$ if 
\begin{equation*}
\left(\sum_{i=1}^{n-1} \frac{a_i}{p^i}\right) +\frac{1}{p^n}< x,
\end{equation*}
and $a_n:=0$ otherwise. 
Then $a_p(x)$ is again an expansion of $x$ in base $p$; it is called its \emph{quasi-greedy expansion}. 

In the present context it is customary to call a sequence \emph{finite} if it ends with $10^{\infty}$, and \emph{infinite} otherwise: hence the infinite sequences are $0^{\infty}$ and the sequences containing infinitely many $1$ digits.
Using this terminology, $a_p(x)$ is the lexicographically largest \emph{infinite} expansion of $x$ in base $p$.

Parry \cite{Par1960} gave a lexicographic characterization of greedy expansions by using the quasi-greedy expansion $a_p(1)$ of $x=1$, and  a similar characterization of quasi-greedy expansions was given in \cite{BaiKom2007}:

\begin{theorem*}
Let $p\in (1,2]$ and $(c_i)\in\set{0,1}^{\infty}$. 

\begin{enumerate}[\upshape (i)]
\item We have $(c_i)=b_p(x)$ for some $x\in [0,\frac{1}{p-1}]$ if and only if
\begin{equation*}
c_{n+1}c_{n+2}\cdots<a_p(1)\qtq{whenever} c_n=0.
\end{equation*} 
\item We have $(c_i)=a_p(1)$ for some $x\in [0,\frac{1}{p-1}]$ if and only if
\begin{equation*}
c_{n+1}c_{n+2}\cdots\le a_p(1)\qtq{whenever} c_n=0.
\end{equation*} 
\end{enumerate}
\end{theorem*}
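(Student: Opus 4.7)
The plan is to translate the recursive definitions of $b_p(x)$ and $a_p(x)$ into inequalities about the shifted tails $\sigma^n c := c_{n+1}c_{n+2}\cdots$. After the first $n-1$ digits of an expansion have been fixed, a direct manipulation of the defining inequality (subtract $\sum_{i<n}c_i/p^i$ and multiply by $p^n$) shows that the greedy algorithm sets $c_n = 0$ iff $\pi_p(\sigma^n c) < 1$, while the quasi-greedy algorithm sets $c_n = 0$ iff $\pi_p(\sigma^n c) \le 1$. The theorem thus hinges on comparing $\pi_p(\sigma^n c)$ with $1$ against comparing $\sigma^n c$ lexicographically with $a_p(1)$.

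For the necessity directions, I would first establish that $a_p(1)$ is the lexicographically largest \emph{infinite} $0$-$1$ sequence whose $\pi_p$-value is at most $1$. Given infinite $d$ with $d > a_p(1)$ in lex order, at the first differing position $k$ one has $a_k = 0$ and $d_k = 1$; the quasi-greedy condition producing $a_k = 0$ yields $\sum_{i > k} a_i/p^i \le 1/p^k$, and combining this with the hypothesis $\pi_p(d) \le 1 = \pi_p(a_p(1))$ forces $d_i = 0$ for all $i > k$, contradicting the infiniteness of $d$. With this characterization in hand, if $(c_i) = b_p(x)$ and $c_n = 0$ then $\pi_p(\sigma^n c) < 1$ implies $\sigma^n c < a_p(1)$ in lex order (the strict inequality because $\pi_p(a_p(1)) = 1$); necessity in (ii) is the same with $\le$, using that quasi-greedy expansions are always infinite.

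For sufficiency, I would argue by contradiction. Setting $x := \pi_p(c)$ and $b := b_p(x)$ (resp.\ $a_p(x)$), suppose $(c_i) \neq b$; since $b$ is lex maximal among expansions (resp.\ among infinite expansions) of $x$, the first differing position $k$ must satisfy $c_k = 0$ and $b_k = 1$. Equating the two expansions of $x$ from position $k$ onward gives $\pi_p(\sigma^k c) = 1 + \pi_p(\sigma^k b) \ge 1$, while the Parry-type hypothesis on $c$ at $k$ yields $\sigma^k c < a_p(1)$ in lex order (resp.\ $\le$). The main obstacle is that when $p < 2$ a lexicographic inequality does not directly deliver a value inequality, so these two statements are not obviously contradictory; indeed, for $p$ close to $1$ there exist sequences lex strictly below $a_p(1)$ whose $\pi_p$-values still reach $1$. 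The resolution is to exploit that $\sigma^k c$ \emph{inherits} the full Parry condition from $c$, and then iteratively peel the common prefix of $\sigma^k c$ with $a_p(1)$ at each successive zero-position: either the iteration terminates, contradicting the strict lex inequality of (i), or it continues indefinitely, forcing the tail sum of $\sigma^k c$ to collapse below $1$ and so contradicting $\pi_p(\sigma^k c) \ge 1$. Part (ii) runs in parallel, with the lex maximality of $a_p(x)$ among infinite expansions of $x$ replacing the lex maximality of $b_p(x)$ at the corresponding step.
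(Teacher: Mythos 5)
The paper does not actually prove this theorem: it is quoted from Parry \cite{Par1960} and from \cite{BaiKom2007}, so there is no in-paper proof to compare against. Your outline follows the standard argument of those sources. The necessity half is essentially right; note only that for part (i) you should not route the argument through the ``lexicographically largest \emph{infinite} sequence'' characterization, because a tail $\sigma^n c$ of a greedy expansion may be finite. This is harmless, since the inequality you need holds for arbitrary sequences: if $d\ge a_p(1)=:\alpha$ and $k$ is the first index with $\alpha_k=0<d_k$, the quasi-greedy condition gives $\sum_{i>k}\alpha_ip^{-i}\le p^{-k}$ and hence $\pi_p(d)\ge 1$ directly. Also, your opening ``iff'' is really only the implication ``$c_n=0$ implies $\pi_p(\sigma^n c)<1$'' (a digit $1$ carries no such information), but that is the only direction you use.

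Two genuine gaps remain in the sufficiency half. First, in part (ii) you compare $c$ lexicographically with $a_p(x)$ by invoking that the latter is lex-maximal among \emph{infinite} expansions of $x$; this is invalid unless $c$ itself is infinite, and your hypothesis does not force that. Indeed $c=10^{\infty}$ satisfies $\sigma^nc=0^{\infty}\le\alpha$ at every zero position, yet it is not a quasi-greedy expansion since $a_p(1/p)=0\alpha_1\alpha_2\cdots$. This reflects a missing hypothesis in the statement as transcribed here (the characterization in \cite{BaiKom2007} requires $(c_i)$ to be infinite), but your proof must either add that hypothesis or treat finite sequences separately. Second, the ``peeling/collapse'' step is only a slogan where the real work lies. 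To make it a proof, write the contraction explicitly: if $d$ and all its shifts at zero positions are $\le\alpha$ and $j_1\ge1$ is the first index with $d_{j_1}\ne\alpha_{j_1}$ (so $d_{j_1}=0$), then $\pi_p(d)-1=p^{-j_1}\bigl(\pi_p(\sigma^{j_1}d)-1-\pi_p(\sigma^{j_1}\alpha)\bigr)\le p^{-j_1}\bigl(\pi_p(\sigma^{j_1}d)-1\bigr)$, and since $\sigma^{j_1}d$ inherits the hypothesis, iteration yields $\pi_p(d)-1\le p^{-m}\cdot\frac{2-p}{p-1}\to0$. For the strict inequality needed in (i) you must further observe that $\alpha$ has infinitely many $1$ digits, so $\pi_p(\sigma^{j_1}\alpha)>0$ and already the first step is strict. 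With these two points repaired, your argument is correct and is the same one as in the cited references.
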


\begin{remark}\label{r12}
We have $a_p(x)\ne b_p(x)$ if and only if $b_p(x)$ is finite. 
Hence the two expansions differ only for countably many values of $x$.
These values are dense in $J_p$.
Indeed, for any fixed $x\in J_p$ the truncated finite sequences
\begin{equation*}
b_1(x,p)\cdots b_n(x,p)0^{\infty},\quad n=1,2,\ldots
\end{equation*}
are greedy expansions by Parry's theorem, say 
\begin{equation*}
b(x_n,p)=b_1(x,p)\cdots b_n(x,p)0^{\infty},\quad n=1,2,\ldots,
\end{equation*}
and then $x_n\to x$.

Observe that the discontinuity points of $b_{p,q}$ are exactly the points $x$ where $a_p(x)\ne b_p(x)$.
\end{remark}

There is a variant of Theorem \ref{t11} for quasi-greedy expansions.
We introduce for $p\in(1,2]$ and $q>1$ the \emph{quasi-greedy base-change functions}
\begin{equation*}
a_{p,q}:=\pi_q\circ a_p:J_p\to J_q.
\end{equation*}
Explicitly, we have
\begin{equation*}
a_{p,q}(x):=\sum_{i=1}^{\infty} \frac{a_i}{q^i}\qtq{with}(a_i)=a_p(x).
\end{equation*}
We again exclude the trivial case $p=q$.

\begin{theorem}\label{t13}
Let $p\in(1,2]$ and $q>1$.

\begin{enumerate}[\upshape (i)]
\item The function $a_{p,q}$ is left continuous.
It is right continuous in $x\in [0,\frac{1}{p-1})$ if and only if $a_p(x)=b_p(x)$.
\item If $q<p$, then $a_{p,q}$ is nowhere monotone, not left differentiable anywhere, and not right differentiable in $x$ if the greedy expansion $b_p(x)$ contains at most finitely many non-zero digits.
\item If $q>p$, then $a_{p,q}$ is increasing, and hence differentiable almost everywhere.\end{enumerate}
\end{theorem}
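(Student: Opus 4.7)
The plan is to mirror the proof of Theorem~\ref{t11}, exchanging greedy with quasi-greedy expansions and left with right. Since by Remark~\ref{r12} the two functions $a_{p,q}$ and $b_{p,q}$ coincide on the dense co-countable set where $b_p(x)$ is infinite, parts~(ii) and~(iii) will in large part be obtained from their Theorem~\ref{t11} counterparts by comparison; part~(i), by contrast, requires a new ingredient: the lexicographic monotonicity of $a_p$, namely that $y<x$ in~$J_p$ implies $a_p(y)\le a_p(x)$ in lex order. I would establish this by contradiction, comparing the $\pi_p$-values at the first position of disagreement and invoking the quasi-greedy Parry inequality $c_{n+1}c_{n+2}\cdots\le a_p(1)$ (valid whenever $c_n=0$) to bound the tails.

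For part~(i), given $y_k\nearrow x$ the sequences $a_p(y_k)$ are lex non-decreasing and bounded above by $a_p(x)$, hence converge digit-by-digit to some~$c$ with $\pi_p(c)=x$; the Parry condition is inherited by~$c$ since lex order is closed under pointwise limits. If $c<a_p(x)$ in lex, letting $n$ be the first disagreement and applying the Parry bound to the tail of $c$ together with $\pi_p(c)=\pi_p(a_p(x))$ forces $a_p(x)$ to terminate in $0^\infty$, contradicting the infiniteness of $a_p(x)$ for $x>0$. Hence $c=a_p(x)$, and applying $\pi_q$ gives left continuity of $a_{p,q}$. The right-continuity dichotomy runs analogously: when $a_p(x)=b_p(x)$, a right-approaching limit $c$ satisfies $c\ge a_p(x)$ lex and is simultaneously an expansion of $x$, hence $\le b_p(x)=a_p(x)$ lex, so $c=a_p(x)$. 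When $a_p(x)\ne b_p(x)$, so that $b_p(x)=b_1\cdots b_{N-1}10^\infty$ is finite and $a_p(x)=b_1\cdots b_{N-1}0\,a_p(1)$, a direct estimate shows $\lim_{y\to x^+}a_{p,q}(y)=b_{p,q}(x)$, and this limit differs from $a_{p,q}(x)$ by
\begin{equation*}
q^{-N}\bigl(1-\pi_q(a_p(1))\bigr),
\end{equation*}
which is nonzero since $\pi_q(a_p(1))>1$ when $q<p$ and $\pi_q(a_p(1))<1$ when $q>p$.

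For part~(iii) with $q>p$, combining the identity $a_{p,q}(x)=\lim_{y\to x^-}b_{p,q}(y)$ coming from part~(i) with the strict monotonicity of $b_{p,q}$ from Theorem~\ref{t11}(iii) makes $a_{p,q}$ non-decreasing; inserting a continuity point of $b_{p,q}$ strictly between any $y_1<y_2$ upgrades this to strict increase, and Lebesgue's theorem then supplies almost-everywhere differentiability. For part~(ii) with $q<p$, the non-monotonicity of $b_{p,q}$ on every subinterval together with the density of the agreement set produces inverting pairs on which $a_{p,q}=b_{p,q}$, so $a_{p,q}$ is nowhere monotone; non-left-differentiability transfers via the same agreement-set sequences (with an extra divergent term $(b_{p,q}(x)-a_{p,q}(x))/(y-x)$ appearing when $a_p(x)\ne b_p(x)$), while at points where $b_p(x)$ has finite support the failure of right continuity from part~(i) already precludes right differentiability. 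The main delicacy throughout is the identification of digit-by-digit limits of quasi-greedy expansions with $a_p$ of their $\pi_p$-values, that is, the exclusion of a limit lying properly below $a_p(x)$ in lex order; once that is settled, the rest is a systematic translation of the Theorem~\ref{t11} arguments.
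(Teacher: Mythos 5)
Your parts (i) and (iii), and the nowhere-monotonicity claim in (ii), can be pushed through along the lines you sketch (the transfer of non-monotonicity needs one more step -- if $a_{p,q}$ were monotone on an interval, you must combine the density of the agreement set with the \emph{right continuity} of $b_{p,q}$ to conclude that $b_{p,q}$ itself is monotone there -- but that is routine). The genuine gap is in the non-left-differentiability claim of part (ii) at the points $x$ where $a_p(x)\ne b_p(x)$, i.e.\ where $b_p(x)$ is finite. These points are dense, so ``not left differentiable anywhere'' cannot be proved without handling them, and your transfer mechanism breaks down exactly there. For $y<x$ in the agreement set your decomposition reads
\begin{equation*}
\frac{a_{p,q}(x)-a_{p,q}(y)}{x-y}
=\frac{b_{p,q}(x)-b_{p,q}(y)}{x-y}+\frac{a_{p,q}(x)-b_{p,q}(x)}{x-y},
\end{equation*}
and you treat the second summand as a divergent perturbation of a controlled main term. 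But the main term itself diverges to $-\infty$: by \eqref{31} we have $b_{p,q}(y)\to\pi_q(a_p(x))=a_{p,q}(x)>b_{p,q}(x)$ as $y\nearrow x$, so the first summand behaves like $\bigl(b_{p,q}(x)-a_{p,q}(x)\bigr)/(x-y)\to-\infty$ while the second behaves like $\bigl(a_{p,q}(x)-b_{p,q}(x)\bigr)/(x-y)\to+\infty$; the leading divergences cancel exactly, and the sum is $\bigl(a_{p,q}(x)-b_{p,q}(y)\bigr)/(x-y)$, a genuine $0/0$ form by the left continuity you established in part (i). Nothing in your argument excludes a finite limit.

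This is precisely the point the authors flag after the statement of Theorem \ref{t13} (``the proofs of the crucial properties (ii) are quite different''). Their Proposition \ref{p42} treats all $x>0$ uniformly by a separate construction rather than a transfer: fix $r\in(1,q)$, set $(\tilde\alpha_i):=a_r(1)$, verify through the lexicographic characterization of quasi-greedy sequences that $a_1\cdots a_{n-1}0\tilde\alpha_1\tilde\alpha_2\cdots$ is again $p$-quasi-greedy whenever $a_n=1$ (so it equals $a_p(x_n)$ for some $x_n<x$), and estimate
\begin{equation*}
\frac{a_{p,q}(x)-a_{p,q}(x_n)}{x-x_n}
\ge\left(\frac{p}{q}\right)^{n-1}\frac{p-1}{q}\left(1-\pi_q(\tilde\alpha_1\tilde\alpha_2\cdots)\right)\to\infty,
\end{equation*}
the key strict inequality $\pi_q(\tilde\alpha_1\tilde\alpha_2\cdots)<\pi_r(\tilde\alpha_1\tilde\alpha_2\cdots)=1$ coming from $r<q$. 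Some such new device, producing left approximants whose quasi-greedy expansions you control directly, is unavoidable; the Theorem \ref{t11} argument cannot simply be relabelled.
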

The statements (ii), (iii) of Theorems \ref{t11} and \ref{t13} are similar, the proofs of the crucial properties (ii) are quite different: see the proof of Proposition \ref{p42} below.

Now we turn to the case $p>2$.
In this case $J_p$ is a proper subset of $[0,\frac{1}{p-1}]$, and each $x\in J_p$ has a unique expansion.
Indeed, if $(c_i), (d_i)$ are two different sequences with $c_1\cdots c_{n-1}=d_1\cdots d_{n-1}$ and $c_n>d_n$ for some $n\ge 1$, then
\begin{equation*}
\sum_{i=1}^{\infty}\left(\frac{c_i}{p^i}-\frac{d_i}{p^i}\right)
\ge \frac{1}{p^n}-\sum_{i=n+1}^{\infty}\frac{1}{p^i}
=\frac{p-2}{p^n(p-1)}>0.
\end{equation*}

We will clarify its topological nature of $J_p$:

\begin{theorem}\label{t14}
Let $p>2$.

\begin{enumerate}[\upshape (i)]
\item $J_p$ is a Cantor set  of Hausdorff dimension $\frac{1}{\log p}$, and hence a null set.
\item $x\in J_p$ is a right accumulation point of $J_p$ if and only if its unique expansion has infinitely many zero digits.
\item $x\in J_p$ is a left accumulation point of $J_p$ if and only if its unique expansion has infinitely many  one digits.
\end{enumerate}
\end{theorem}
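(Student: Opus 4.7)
The plan is to exploit that $J_p=\pi_p(\{0,1\}^{\infty})$ is the image of a compact set under the continuous map $\pi_p$, hence compact, and that it is the attractor of the two similitudes $f_0(x)=x/p$ and $f_1(x)=(x+1)/p$ acting on $[0,1/(p-1)]$. The essential quantitative input, already displayed in the excerpt, is the separation estimate
\begin{equation*}
\abs{\pi_p(c)-\pi_p(d)}\ge\frac{1}{p^n}-\sum_{i>n}\frac{1}{p^i}=\frac{p-2}{p^n(p-1)}>0
\end{equation*}
whenever $c,d\in\{0,1\}^{\infty}$ first differ at position $n$. This single bound supplies the uniqueness of expansions, the disjointness $f_0(J_p)\cap f_1(J_p)=\emptyset$, and the non-trivial halves of (ii) and (iii).

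For (ii), if $x=\pi_p(c)$ and $c$ has a $0$ at position $n$, flipping only that digit yields a sequence $c^{(n)}\in\{0,1\}^{\infty}$ and a point $y_n=\pi_p(c^{(n)})\in J_p$ with $y_n-x=1/p^n>0$; letting $n$ run through the positions of the $0$'s in $c$ shows that $x$ is a right accumulation point. Conversely, suppose $c$ contains only finitely many $0$'s and let $N$ be the largest index with $c_N=0$ (with $N=0$ if $c=1^{\infty}$). For every $y=\pi_p(d)\in J_p$ with $y>x$, the first index $k$ at which $d$ and $c$ differ satisfies $d_k=1$ and $c_k=0$, so $k\le N$, and the separation estimate gives $y-x\ge(p-2)/(p^N(p-1))>0$; hence $x$ is not a right accumulation point. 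Statement (iii) is the symmetric argument with the roles of $0$ and $1$ interchanged.

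Part (i) is then assembled as follows. $J_p$ is compact; since $f_0([0,1/(p-1)])\subseteq[0,1/(p(p-1))]$ and $f_1([0,1/(p-1)])\subseteq[1/p,1/(p-1)]$ are separated by a gap (as $p>2$), iterating this observation at every finite level shows that $J_p$ is totally disconnected; and by (ii)--(iii) every point of $J_p$ is an accumulation point, because any element of $\{0,1\}^{\infty}$ has infinitely many $0$'s or infinitely many $1$'s. Hence $J_p$ is a Cantor set. For the dimension, $J_p=f_0(J_p)\cup f_1(J_p)$ with the two pieces disjoint and $f_0,f_1$ similitudes of ratio $1/p$, so the strong separation condition holds and Moran's theorem yields $\dim_H J_p=s$ with $2p^{-s}=1$, i.e.\ $s=1/\log_2 p$, and $0<\mathcal{H}^s(J_p)<\infty$; in particular $J_p$ is a Lebesgue null set.

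The only non-routine step, should one wish to avoid citing Moran's theorem, is the lower bound on $\mathcal{H}^s(J_p)$: the upper bound is immediate from covering $J_p$ by the $2^n$ cylinder sets at level $n$, each of diameter at most $1/(p^n(p-1))$, while the lower bound follows from pushing the symmetric Bernoulli measure on $\{0,1\}^{\infty}$ forward by $\pi_p$ and using the separation estimate to verify $\mu(B(x,r))\le Cr^s$ for every ball, after which the mass distribution principle concludes.
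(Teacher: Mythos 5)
Your proposal is correct. Parts (ii) and (iii) coincide with the paper's argument in all essentials: the paper approximates from the right by truncating at a zero digit and adding $1/p^n$ (rather than flipping the digit and keeping the tail, as you do — a cosmetic difference, since both produce points of $J_p$ at distance comparable to $p^{-n}$), proves the converse by exactly your uniform separation bound $(p-2)/(p^N(p-1))$, and obtains (iii) from (ii) by the reflection $x\mapsto \frac{1}{p-1}-x$, which is your ``interchange $0$ and $1$''. Where you genuinely diverge is the Hausdorff dimension in (i). The paper (Propositions \ref{p22} and \ref{p23}(iv)) first computes $\dime_H\set{0,1}^{\infty}=1$ by running Moran's equation on the symbolic space with its two ratio-$\frac12$ shifts, and then transports this to $J_p$ through the two-sided H\"older estimates \eqref{21}--\eqref{22}, which distort dimension by exactly the factor $\log p$; the null-set property and hence the empty interior then fall out of $\dime_H J_p<1$. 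You instead apply Moran's theorem (or the mass distribution principle) directly to the real-line IFS $f_0(x)=x/p$, $f_1(x)=(x+1)/p$ under strong separation, and get total disconnectedness from the level-one gap rather than from measure zero. Your route is the more classical middle-thirds computation and is self-contained; the paper's route has the advantage that the bi-H\"older estimates \eqref{21}--\eqref{22} are needed anyway for the exact H\"older exponent of $b_{p,q}$ in Theorem \ref{t15}, so the dimension comes for free once they are established. Both yield $s=\log_p 2$, which is the paper's $\frac{1}{\log p}$ in its base-two logarithm convention.
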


Denoting by $b_p(x)=(b_i)$ the unique expansion of $x$ in base $p>2$, we may extend the definition $b_{p,q}:=\pi_q\circ b_p:J_p\to J_q$ for all $p,q>1$; explicitly,
\begin{equation*}
b_{p,q}(x):=\sum_{i=1}^{\infty} \frac{b_i}{q^i}\qtq{with} (b_i)=b_p(x).
\end{equation*}
However, the behavior of the functions $b_{p,q}$ is quite different for $p>2$:

\begin{theorem}\label{t15}
Let $p>2$ and $q>1$.

\begin{enumerate}[\upshape (i)]
\item The function $b_{p,q}$ is H\"older continuous with the exact exponent $\frac{\log q}{\log p}$.
\item If $q<p$, then $b_{p,q}$ is nowhere monotone.
\item If $q=2$, then $b_{p,q}$ is non-decreasing.
\item If $q>2$, then $b_{p,q}$ is increasing.
\end{enumerate}
\end{theorem}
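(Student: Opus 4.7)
My plan is to exploit the unique-expansion property of base $p>2$ (established just above the statement) throughout, and to base everything on the following dichotomy. If $x<y$ in $J_p$ and $n$ is the smallest index at which their (unique) expansions differ, then necessarily $b_n(x,p)=0$ and $b_n(y,p)=1$: the reverse assignment would give
\begin{equation*}
y-x\leq -\frac{1}{p^n}+\sum_{i>n}\frac{1}{p^i}=-\frac{p-2}{p^n(p-1)}<0,
\end{equation*}
contradicting $x<y$; the sign computation uses $p>2$ essentially. Combined with $|b_i(y,p)-b_i(x,p)|\leq 1$ this yields the two-sided bounds
\begin{equation*}
\frac{p-2}{p^n(p-1)}\leq y-x\leq \frac{p}{p^n(p-1)},\qquad |b_{p,q}(y)-b_{p,q}(x)|\leq \frac{q}{q^n(q-1)},
\end{equation*}
and hence, eliminating $n$ via $q^{-n}=(p^{-n})^{\log q/\log p}$, the H\"older estimate in (i) with exponent $\log q/\log p$. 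Sharpness follows by testing $x_n=0$ against $y_n=1/p^n=\pi_p(0^{n-1}10^\infty)$: one has $|y_n-x_n|=p^{-n}$ and $|b_{p,q}(y_n)-b_{p,q}(x_n)|=q^{-n}=|y_n-x_n|^{\log q/\log p}$, so no larger H\"older exponent can hold.

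For (iii) and (iv) the same first-differing-digit analysis gives
\begin{equation*}
b_{p,q}(y)-b_{p,q}(x)\geq \frac{1}{q^n}-\sum_{i>n}\frac{1}{q^i}=\frac{q-2}{q^n(q-1)},
\end{equation*}
which is strictly positive for $q>2$ (yielding (iv)) and equal to zero for $q=2$ (yielding (iii)). The bound at $q=2$ is actually attained on the pair $\pi_p(0^{n-1}01^\infty)$ and $\pi_p(0^{n-1}10^\infty)$ -- distinct points of $J_p$ because $p>2$ -- both of which $b_{p,2}$ sends to $1/2^n$, so the monotonicity in (iii) is only weak.

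For (ii) I will construct explicit local non-monotonicity at every $x\in J_p$; the assertion is informative exactly in the range $q<2$ left open by (iii)--(iv), and this is the regime I will treat. Writing $(c_i):=b_p(x)$ and fixing a large integer $n$, set $c:=\pi_p(c_1\cdots c_n0^\infty)$ and
\begin{equation*}
y:=\pi_p(c_1\cdots c_n01^\infty),\qquad z:=\pi_p(c_1\cdots c_n10^\infty),\qquad w:=\pi_p(c_1\cdots c_n110^\infty).
\end{equation*}
Direct computation gives $y-c=1/(p^{n+1}(p-1))$, $z-c=1/p^{n+1}$ and $w-c=1/p^{n+1}+1/p^{n+2}$, so $y<z<w$ because $p>2$. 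The analogous $q$-differences are $1/(q^{n+1}(q-1))$, $1/q^{n+1}$ and $1/q^{n+1}+1/q^{n+2}$, and comparing these shows $b_{p,q}(y)>b_{p,q}(z)<b_{p,q}(w)$ precisely when $q<2$. Letting $n\to\infty$ produces such a triple in every neighbourhood of $x$, ruling out monotonicity there. The only obstacle I foresee is bookkeeping: one must pick the tails $01^\infty$ versus $10^\infty$ (the natural candidates, as they are the two expansions that collapse to the same numerical value exactly at $q=2$) so that the sign of the second-order difference flips on crossing $q=2$; that all three points of the triple automatically lie in $J_p$ is immediate from the unique-expansion property.
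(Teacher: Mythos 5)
Your proof is correct. For parts (i), (iii) and (iv) you are doing by hand what the paper does by composition: it factors $b_{p,q}=\pi_q\circ b_p$ and invokes Proposition \ref{p23} (the two-sided estimates $\frac{q-2}{q-1}\rho(c,d)^{\log q}\le\abs{\pi_q(c)-\pi_q(d)}\le\frac{q}{q-1}\rho(c,d)^{\log q}$ and the monotonicity trichotomy of $\pi_q$ around $q=2$) together with Proposition \ref{p26}(ii) (the bi-H\"older bounds for $b_p=\pi_p^{-1}$ when $p>2$); your first-differing-digit dichotomy is precisely the content of those propositions unpacked, and your sharpness family $(0,p^{-n})$ is a cleaner witness of exactness at all scales than a single pair of sequences. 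Where you genuinely diverge is part (ii). The paper deduces it from Proposition \ref{p31}(iii), whose construction is designed to also cover $1<p\le 2$: there one cannot append arbitrary tails to a greedy prefix, so the paper inserts the quasi-greedy expansion $a_r(1)$ of an auxiliary base $r\in(q,\min\set{p,2})$ and checks admissibility via Parry's criterion. Since here $p>2$, every sequence in $\set{0,1}^{\infty}$ is the unique (hence greedy) expansion of a point of $J_p$, so your bare-hands triple with tails $01^{\infty}$, $10^{\infty}$, $110^{\infty}$ is legitimate and considerably simpler; it buys elementarity at the price of not generalizing to $p\le 2$. Finally, you were right to read (ii) as asserting nowhere-monotonicity only for $q<2$: as literally stated ($q<p$) it would contradict (iii) and (iv) whenever $2\le q<p$, and the paper's own Proposition \ref{p31} confirms that the intended trichotomy for $p>2$ is $q<2$, $q=2$, $q>2$.
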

Note that for $q>p>2$ the Hölder exponent in (i) is greater than one. 
This does not contradict the classical result that only constant functions have Hölder exponents greater than one, because the domain $J_p$ of $b_{p,q}$ is not an interval if $p>2$.

If $p>2$, then $J_p$ is a closed set in $[0,\frac{1}{p-1}]$, containing the endpoints of this interval.
Therefore we may extend the function $b_{p,q}:J_p\to J_q$ to a continuous function defined on  $[0,\frac{1}{p-1}]$.
Adapting the construction of Cantor's ternary function (it corresponds to the case $p=3$ and $q=2$) we introduce the continuous and surjective function
\begin{equation*}
B_{p,q}:\left[0,\frac{1}{p-1}\right]\to \left[0,\frac{1}{q-1}\right]
\end{equation*} 
that coincides with $b_{p,q}$ on $J_p$, and is affine on each connected component of $[0,\frac{1}{p-1}]\setminus J_p$.

\begin{theorem}\label{t16}
Let $p>2$ and $q>1$.

\begin{enumerate}[\upshape (i)]
\item The function $B_{p,q}$ is differentiable almost everywhere, and is H\"older continuous with the exact exponent $\min\set{1,\frac{\log q}{\log p}}$.
\item If $q<2$, then $B_{p,q}$ has no bounded variation.
\item If $q=2$, then $B_{p,q}$ is non-decreasing, but not absolutely continuous.
Its arc length is equal to $p/(p-1)$.
\item If $q>2$, then $B_{p,q}$ is increasing, absolutely continuous, and $B_{p,q}'>0$ a.e.
\end{enumerate}
\end{theorem}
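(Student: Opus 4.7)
Throughout the proof I would exploit the gap geometry of $J_p$. By Theorem \ref{t14}(i), $J_p$ is a null Cantor set, and the open set $[0,1/(p-1)]\setminus J_p$ decomposes into countably many gaps, one per finite binary word $b_1\cdots b_{n-1}$ (with $n\ge 1$); there are $2^{n-1}$ such \emph{level-$n$} gaps, each of length $h_n=(p-2)/(p^n(p-1))$, across which $B_{p,q}$ is affine with jump $j_n=(q-2)/(q^n(q-1))$. Since $B_{p,q}$ is locally affine on the full-measure set $[0,1/(p-1)]\setminus J_p$, it is differentiable almost everywhere, settling that part of (i) for every $q>1$.

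For the H\"older estimate in (i), set $\alpha=\log q/\log p$. Theorem \ref{t15}(i) supplies the $\alpha$-H\"older bound on $J_p$. Using $p^{\alpha}=q$ one checks that $|j_n|\,h_n^{-\alpha}$ is a constant independent of $n$, so $B_{p,q}$ is $\alpha$-H\"older on each gap with a uniform constant when $\alpha\le 1$; when $\alpha>1$, the gap slopes $|j_n|/h_n$ are directly bounded (since $p/q<1$ in that case), while the $\alpha$-H\"older bound on the bounded set $J_p$ upgrades to a Lipschitz bound. In either case I obtain a uniform $\min\set{1,\alpha}$-H\"older bound both on $J_p$ and on each individual gap. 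For arbitrary $x<y$ in $[0,1/(p-1)]$ I triangulate through two points of $J_p$, namely the right endpoint $a$ of $x$'s gap and the left endpoint $b$ of $y$'s gap, satisfying $x\le a\le b\le y$, and combine the three resulting H\"older estimates using $|x-a|,|a-b|,|b-y|\le|x-y|$. Exactness of the exponent follows from Theorem \ref{t15}(i) when $\alpha\le 1$, and from the classical fact that a non-constant function on an interval cannot be H\"older with exponent greater than one when $\alpha>1$.

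The sign of $j_n$ equals the sign of $q-2$, and this governs the remaining parts. If $q<2$, a partition through all level-$\le N$ gap endpoints already yields variation at least $\sum_{n=1}^N 2^{n-1}|j_n|$, a divergent series since $2/q>1$, establishing (ii). If $q=2$, every $j_n$ vanishes, so $B_{p,2}$ is constant on each gap; combined with the monotonicity of $b_{p,q}$ on $J_p$ from Theorem \ref{t15}(iii), this gives the non-decreasing assertion in (iii). Absolute continuity fails because $B'_{p,2}=0$ almost everywhere while $B_{p,2}$ rises from $0$ to $1$. The arc length equals $p/(p-1)$: the general bound for non-decreasing continuous functions gives arc length at most $(1/(p-1))+1$, and the polygonal approximation using the endpoints of the $2^n$ level-$n$ $J_p$-cylinders contributes $\sqrt{(2/p)^{2n}/(p-1)^2+1}\to 1$ from the cylinder segments and $(1-(2/p)^n)/(p-1)\to 1/(p-1)$ from the gap segments, so the bound is attained.

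Finally for (iv) with $q>2$, positivity of every $j_n$ together with Theorem \ref{t15}(iv) gives strict monotonicity, and the gap slopes show $B'_{p,q}>0$ almost everywhere. The telescoping identity $\sum_{n\ge 1}2^{n-1}j_n=1/(q-1)=B_{p,q}(1/(p-1))-B_{p,q}(0)$ extends to any subinterval $[a,b]$ by summing $j_n$ over gaps contained in $[a,b]$ and adding the two partial contributions at the ends; this yields $\int_a^b B'_{p,q}=B_{p,q}(b)-B_{p,q}(a)$, which for a monotone function is equivalent to absolute continuity. The step I expect to be the main obstacle is the uniform H\"older bound on the gaps in (i), since the pointwise slopes can be unbounded and one must extract the correct constant from the precise $p$-$q$ scaling encoded in $p^{\alpha}=q$; the rest is bookkeeping with the gap structure together with invocations of Theorems \ref{t14} and \ref{t15}.
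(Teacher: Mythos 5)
Your proposal is correct and follows essentially the same route as the paper: the gap decomposition of $[0,\frac{1}{p-1}]\setminus J_p$ with the length and jump formulas $h_n=\frac{p-2}{p^n(p-1)}$ and $j_n=\frac{q-2}{q^n(q-1)}$, the uniform gap H\"older constant extracted from $p^{\alpha}=q$ combined with triangulation through gap endpoints lying in $J_p$, the Newton--Leibniz criterion $\int B_{p,q}'=B_{p,q}(\frac{1}{p-1})-B_{p,q}(0)$ for deciding absolute continuity of the monotone cases, and the polygonal approximation for the arc length. The only deviations are cosmetic: in (ii) you count the $2^{n-1}$ level-$n$ gaps directly rather than invoking the paper's count of $r$-greedy words (which is only needed for $p\le 2$), and in the case $q>p>2$ you obtain the Lipschitz bound by triangulation rather than via the paper's ``absolutely continuous with a.e.\ bounded derivative'' argument.
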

The theorem shows that the above mentioned properties of Cantor's function remain valid for $q=2$ and all $p>2$, but not for $q\ne 2$.

The proof of the theorem (see equation \eqref{43} below) will also yield an explicit formula for the derivative.
If we denote by $I_{m,k}$, $k=1,\ldots, 2^m$ the removed intervals at the $m$th step of the construction of 
$B_{p,q}$ ($m=0,1,\ldots$), then
\begin{equation*}
B_{p,q}'(x)=\left(\frac{p}{q}\right)^{m+1}\frac{(q-2)(p-1)}{(p-2)(q-1)},\quad x\in I_{m,k}
\end{equation*}
for all $m$ and $k$.

The remainder of the paper is devoted to the proof of the above statements.
They may be easily adapted to the case of more general digits sets $\set{0,1,\ldots,M}$ with a given positive integer, by distinguishing the intervals $(1,M+1)$ and $(M+1,\infty)$ instead of $(1,2)$ and $(2,\infty)$.

\section{Study of the map $\pi_p$}
\label{s2}

The set $\set{0,1}^{\infty}$ of sequences $c=(c_i)$ is compact for the Tikhonov product topology, induced by 
the following metric: $\rho(c,d)=0$ if $c=d$, and $\rho(c,d)=2^{-n}$ if $c_1\cdots c_{n-1}=d_1\cdots d_{n-1}$ and $c_n\ne d_n$.
The corresponding convergence is the \emph{coordinate-wise convergence}.

The set $\set{0,1}^{\infty}$ also has a natural \emph{lexicographic order} and a corresponding \emph{order topology}.
The two topologies coincide:

\begin{proposition}\label{p21}
The product topology and the order topology coincide on $\set{0,1}^{\infty}$.
\end{proposition}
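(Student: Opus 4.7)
The plan is to exhibit a base for each topology and show each basic open set lies in the other topology. A base for the product topology consists of the cylinders $[c_1\cdots c_n]:=\{d\in\set{0,1}^\infty: d_i=c_i\text{ for }i\le n\}$, while a base (actually subbase, which suffices) for the order topology consists of the open rays $\set{d:d<c}$ and $\set{d:d>c}$.

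First I would verify that every cylinder is order-open. A short check using the definition of lexicographic order shows that $[c_1\cdots c_n]$ is exactly the closed order-interval $[c_1\cdots c_n 0^\infty,\,c_1\cdots c_n 1^\infty]$. The key observation is that both endpoints have immediate order-neighbors outside the cylinder, so this closed interval is in fact an order-open interval. Explicitly, if $c_1\cdots c_n\ne 0^n$ and $k$ is the largest index $\le n$ with $c_k=1$, then $c_1\cdots c_{k-1}01^\infty$ is the immediate predecessor of $c_1\cdots c_n 0^\infty$; symmetrically, if $c_1\cdots c_n\ne 1^n$ and $k'$ is the largest index $\le n$ with $c_{k'}=0$, then $c_1\cdots c_{k'-1}10^\infty$ is the immediate successor of $c_1\cdots c_n 1^\infty$. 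Hence the cylinder coincides with the order-open interval between these two neighbors (and becomes an open ray, or the whole space, when one of the extremal sequences $0^\infty$ or $1^\infty$ lies inside).

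Next I would check that each order-open ray is product-open. Take $d$ with $d>c$; let $n$ be the first index where they differ, so necessarily $d_n=1$ and $c_n=0$. Then any $e$ in the cylinder $[d_1\cdots d_n]$ agrees with $c$ on positions $1,\ldots,n-1$ and has $e_n=1>0=c_n$, so $e>c$. Thus $[d_1\cdots d_n]$ is a product-open neighborhood of $d$ contained in $\set{e:e>c}$, proving that ray is product-open; the argument for $\set{e:e<c}$ is mirror-symmetric.

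I do not expect any genuine obstacle here. The only delicate point is bookkeeping for the extremal cases $c_1\cdots c_n=0^n$ or $1^n$, where one of the bracketing order-neighbors does not exist and the corresponding open interval must be replaced by an open ray. As a conceptual shortcut one could invoke the fact that the product topology is compact (by Tychonoff) while every linearly ordered topological space is Hausdorff, so it would suffice to verify just one of the two containments and then apply the standard compact-to-Hausdorff theorem; but checking both directions directly is cleaner and equally short.
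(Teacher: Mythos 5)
Your argument is correct and follows the same overall strategy as the paper's: verify that the generating sets of each topology are open in the other. The second half (each order ray $\set{e:e>c}$ is product-open, via the first index of disagreement) is the same argument as in the paper. The first half differs in mechanism: the paper expresses the ball $B_{2^{-n}}(c)$ (which is exactly the cylinder $[c_1\cdots c_n]$) as a finite intersection of $n$ order-open rays, one condition per coordinate, whereas you identify the cylinder as the closed order-interval from $c_1\cdots c_n0^{\infty}$ to $c_1\cdots c_n1^{\infty}$ and then observe that both endpoints have immediate order-neighbours lying outside the cylinder, so that the cylinder is a single order-open interval (degenerating to an open ray or the whole space in the extremal cases $0^n$, $1^n$). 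Your route carries a small extra verification burden — one must check that $c_1\cdots c_{k-1}01^{\infty}$ and $c_1\cdots c_{k'-1}10^{\infty}$ really are immediate predecessor and successor, which is true and which you set up correctly — but in return it exhibits each cylinder as a single basic order-open set and makes the clopen-ness of cylinders in the order topology transparent; the paper's intersection trick avoids the neighbour computation at the cost of treating the coordinates one at a time. The compactness-plus-Hausdorff shortcut you mention at the end is also legitimate and would let you get away with proving only the containment of the order topology in the product topology.
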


\begin{proof}
Each open ball is a finite intersection of open intervals, and hence open in the order topology:
\begin{equation*}
B_{2^{-n}}(c)=\cap_{j=1}^n A_j(c)
\end{equation*}
with
\begin{equation*}
A_j(c):=
\begin{cases}
\set{d\in \set{0,1}^{\infty}\ :\ d_1\cdots d_{j-1}>01^{\infty}}&\text{if $c_j=1$,}\\
\set{d\in \set{0,1}^{\infty}\ :\ d_1\cdots d_{j-1}<10^{\infty}}&\text{if $c_j=0$,}
\end{cases}
j=1,\ldots, n.
\end{equation*}

Conversely, each interval of the form
\begin{equation*}
A(c):=\set{d\in \set{0,1}^{\infty}\ :\ d>c}
\qtq{or}
\tilde A(c):=\set{d\in \set{0,1}^{\infty}\ :\ d<c}
\end{equation*}
(they form a subbase for the order topology) is open in the metric topology.
Indeed, for any fixed $d\in A(c)$ there exists an integer $n\ge 1$ such that $d_1\cdots d_n>c_1\cdots c_n$, and then $B_{2^{-n-1}}(d)\subset A(c)$ because 
\begin{equation*}
f\in B_{2^{-n-1}}(d) \Longrightarrow f_1\cdots f_n=d_1\cdots d_n>c_1\cdots c_n.
\end{equation*}

The proof for $\tilde A(c)$ is analogous.
\end{proof}

Henceforth we consider this metric and topology on $\set{0,1}^{\infty}$.

\begin{proposition}\label{p22}
The Hausdorff dimension of $\set{0,1}^{\infty}$ is equal to $1$.
\end{proposition}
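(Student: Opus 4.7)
The plan is to sandwich the Hausdorff dimension between $1$ and $1$ by exhibiting an efficient cover from above and a Lipschitz surjection onto $[0,1]$ from below.

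For the upper bound, I would use the natural covering by cylinders. For each $n\ge 1$, the $2^n$ cylinders $\{d\in\set{0,1}^{\infty}\ :\ d_1\cdots d_n=c_1\cdots c_n\}$ partition $\set{0,1}^{\infty}$, and any two elements of such a cylinder first disagree at position $\ge n+1$, so each cylinder has diameter $2^{-(n+1)}$. For $s>1$,
\begin{equation*}
\mathcal{H}^s_{2^{-(n+1)}}(\set{0,1}^{\infty})\le 2^n\cdot (2^{-(n+1)})^s=2^{n(1-s)-s}\xrightarrow[n\to\infty]{} 0,
\end{equation*}
so $\mathcal{H}^s(\set{0,1}^{\infty})=0$ for every $s>1$, giving $\dime_H\le 1$.

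For the lower bound, I would invoke the map $\pi_2:\set{0,1}^{\infty}\to [0,1]$ from \eqref{11}. It is surjective (classical binary expansions), and it is Lipschitz with constant $2$: if $\rho(c,d)=2^{-n}$, then $c$ and $d$ agree on the first $n-1$ coordinates, hence
\begin{equation*}
\abs{\pi_2(c)-\pi_2(d)}=\left\lvert\sum_{i=n}^{\infty}\frac{c_i-d_i}{2^i}\right\rvert\le \sum_{i=n}^{\infty}2^{-i}=2^{1-n}=2\rho(c,d).
\end{equation*}
Since Lipschitz maps do not increase Hausdorff dimension, $\dime_H\set{0,1}^{\infty}\ge \dime_H[0,1]=1$.

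Combining the two bounds yields the equality. No step in this plan is substantive: the only place where one has to be careful is keeping track of the indexing conventions in the metric $\rho$, so that the diameter of a length-$n$ cylinder (namely $2^{-(n+1)}$) and the Lipschitz constant (namely $2$) are computed correctly; everything else is bookkeeping. If a mass-distribution argument were preferred to the Lipschitz surjection, one could instead equip $\set{0,1}^{\infty}$ with the uniform Bernoulli measure $\mu$ and observe $\mu(U)\le 2\diam U$ for every Borel $U$, which likewise delivers $\mathcal{H}^1(\set{0,1}^{\infty})\ge 1/2>0$.
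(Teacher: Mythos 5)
Your proof is correct, but it takes a genuinely different route from the paper's. The paper treats $\set{0,1}^{\infty}$ as the attractor of the iterated function system $f_i(c_1c_2\cdots)=(ic_1c_2\cdots)$, $i=0,1$ --- two similarities of ratio $1/2$ with disjoint images --- and invokes a Moran-type theorem from Edgar to read off the dimension as the solution of $2\cdot 2^{-d}=1$. You instead prove the two inequalities by hand: the upper bound $\dime_H\le 1$ from the covering by the $2^n$ cylinders of length $n$ (each of diameter $2^{-(n+1)}$ in the metric $\rho$, which you compute correctly given the paper's indexing convention), and the lower bound from the fact that $\pi_2$ is a $2$-Lipschitz surjection of $\set{0,1}^{\infty}$ onto $[0,1]$, so the dimension cannot drop below $\dime_H[0,1]=1$; your mass-distribution variant with the uniform Bernoulli measure is an equally valid substitute for the lower bound. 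What the paper's approach buys is brevity modulo a black box (one must still verify the hypotheses of the cited theorem, e.g.\ that the images are disjoint); what yours buys is a self-contained, elementary argument whose lower-bound half reuses an object, $\pi_2$, that is already central to the paper (it is essentially inequality \eqref{21} of Proposition \ref{p23} specialized to $q=2$). Both arguments are complete and correct.
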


\begin{proof}
The whole space is the compact invariant set of the iterated function system consisting of the two similarities of ratio $1/2$, defined by the formulas
\begin{equation*}
f_i(c_1c_2\cdots):=(ic_1c_2\cdots),\quad i=0,1.
\end{equation*}
Since their images are disjoint, Applying \cite[Theorem 6.4.3]{Edgar2008} (see also \cite{Falconer2003}) we obtain that the Hausdorff dimension is the solution of the equation 
\begin{equation*}
2^{-d}+2^{-d}=1,
\end{equation*}
i.e., $d=1$.
\end{proof}

Next we investigate the function $\pi_p:\set{0,1}^{\infty}\to J_p$ introduced in \eqref{11}. 
In the following proposition and in the sequel we use base two logarithm.

\begin{proposition}\label{p23} \mbox{}
Let $q>1$.

\begin{enumerate}[\upshape (i)]
\item $\pi_q$ is continuous, and even H\"older continuous with the exact exponent $\alpha=\log p$; more precisely,
\begin{equation}\label{21} 
\abs{\pi_q(c)-\pi_q(d)}\le\frac{q}{q-1}\rho(c,d)^{\log q}
\end{equation} 
for all sequences $c$ et $d$.
Hence its range $J_q$ is a non-empty compact set.
\item If $1<q<2$, then $J_q= \left[0,\frac{1}{q-1}\right]$, and $\pi_q$ is nowhere monotone.
\item If $q=2$, then $J_q= \left[0,1\right]$, and $\pi_q$ is non-decreasing.
\item If $q>2$, then $\pi_q$ is an increasing homeomorphism of $\set{0,1}^{\infty}$ onto $J_q$.
Moreover, we have a converse inequality to \eqref{21} :
\begin{equation}\label{22} 
\frac{q-2}{q-1}\rho(c,d)^{\log q}\le\abs{\pi_q(c)-\pi_q(d)}
\end{equation} 
for all sequences $c$ et $d$.

Hence $J_q$ has Hausdorff dimension $\frac{1}{\log q}<1$ and therefore it is a null set. 
\end{enumerate}
\end{proposition}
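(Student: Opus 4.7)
I would prove the four parts in the order (i), (iii), (iv), (ii), because the non-monotonicity in (ii) is the conceptually trickiest part and it is useful to have (iv) and the basic estimates first.

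For (i), the key observation is that when $\rho(c,d) = 2^{-n}$ the two sequences agree on their first $n-1$ coordinates, so
\[
\abs{\pi_q(c)-\pi_q(d)} = \Bigl\lvert\sum_{i=n}^\infty \frac{c_i-d_i}{q^i}\Bigr\rvert \le \sum_{i=n}^\infty \frac{1}{q^i} = \frac{q}{q-1}\cdot q^{-n},
\]
and $q^{-n} = (2^{-n})^{\log q} = \rho(c,d)^{\log q}$, giving \eqref{21}. To show the exponent is exact, I would exhibit a family realizing the order of magnitude: fix any prefix $w$ of length $n-1$ and compare $c = w10^\infty$ with $d = w0^\infty$; then $\rho(c,d) = 2^{-n}$ and $\abs{\pi_q(c)-\pi_q(d)} = q^{-n} = \rho(c,d)^{\log q}$, so no larger Hölder exponent is possible. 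Continuity of $\pi_q$ together with compactness of $\set{0,1}^\infty$ (Proposition \ref{p22} does not give compactness directly, but the Tikhonov product topology does) then yields that $J_q$ is compact and nonempty. Statement (iii) is the classical observation that base-$2$ expansion is order preserving and surjective onto $[0,1]$, and I would simply record it.

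For (iv), I would prove strict monotonicity and \eqref{22} simultaneously. If $c>d$ lexicographically and $n$ is the first index where they differ, then $c_n=1$, $d_n=0$, and
\[
\pi_q(c)-\pi_q(d) \ge \frac{1}{q^n} - \sum_{i=n+1}^\infty \frac{1}{q^i} = \frac{q-2}{q-1}\cdot q^{-n} = \frac{q-2}{q-1}\rho(c,d)^{\log q},
\]
which is strictly positive since $q>2$. Combined with (i) this shows $\pi_q$ is a continuous, strictly monotone bijection onto $J_q$, hence a homeomorphism by compactness. The Hausdorff dimension claim then follows from the standard fact that a bi-Hölder map of exponents $\log q$ and $1/\log q$ (i.e.\ \eqref{21} and \eqref{22}) transports Hausdorff dimension by the multiplicative factor $1/\log q$; applying this to Proposition \ref{p22} gives $\dime J_q = 1/\log q$, which is less than $1$ because $q>2$, so $J_q$ has Lebesgue measure zero.

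The main obstacle is (ii), because "nowhere monotone" is stronger than "not monotone". My plan is to reduce it to the global statement by a self-similarity argument: for any prefix $w = c_1\cdots c_n$, the restriction of $\pi_q$ to sequences starting with $w$ is an affine image of $\pi_q$ itself, namely $\pi_q(w\,e_1e_2\cdots) = \pi_q(w0^\infty) + q^{-n}\pi_q(e)$, and the lex order on these extensions agrees with the lex order on the tail $e$. Thus every open ball is order-isomorphic (via an affine map on values) to the whole space, and it suffices to show $\pi_q$ is neither non-decreasing nor non-increasing on $\set{0,1}^\infty$. The first is trivial (compare $0^\infty < 10^\infty$); for the second I would use the pair $10^\infty > 01^\infty$, for which $\pi_q(10^\infty) - \pi_q(01^\infty) = \tfrac{1}{q} - \tfrac{1}{q(q-1)} = \tfrac{q-2}{q(q-1)} < 0$ since $1<q<2$. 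The surjectivity $J_q = [0,1/(q-1)]$ has been attributed above to Rényi via the greedy algorithm, and I would simply cite this together with (i) (which provides continuity and hence connectedness of the image between its extremes $0$ and $1/(q-1)$).
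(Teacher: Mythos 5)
Your proposal is correct and follows essentially the same route as the paper: the same geometric-series estimates yield \eqref{21} and \eqref{22}, the same comparison of $01^{\infty}$ with $10^{\infty}$ inside a cylinder (giving the negative quantity $\frac{q-2}{q^n(q-1)}$) witnesses non-monotonicity for $1<q<2$, and the dimension statement is deduced from the two-sided H\"older bounds together with Proposition \ref{p22} exactly as in the text. The only cosmetic differences are that you organize the nowhere-monotonicity via self-similarity of cylinders instead of working directly inside an arbitrary subinterval of the range, you obtain the homeomorphism in (iv) from compactness rather than from \eqref{22}, and your family of pairs $w10^{\infty}$, $w0^{\infty}$ is in fact a slightly more complete justification of the exactness of the exponent than the paper's single pair $1^{\infty}$, $0^{\infty}$.
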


\begin{proof}
(i) We prove the H\"older continuity of $\pi_q$. If $\rho(c,d)=2^{-n}$, then
\begin{equation*}
c_1\cdots c_{n-1}=d_1\cdots d_{n-1}
\end{equation*}
and therefore 
\begin{equation*}
\abs{\pi_q(c)-\pi_q(d)}
\le\sum_{i=n}^{\infty}\frac{1}{q^i}
=\frac{1}{q^{n-1}(q-1)}
=\frac{q}{q-1}\rho(c,d)^{\log q}.
\end{equation*}
The exponent $\alpha=\log q$ cannot be improved because for $c=1^{\infty}$ and $d=0^{\infty}$ we have equality.

Since $\set{0,1}^{\infty}$ is compact and $\pi_q$ is continuous, the range $J_q$ of $\pi_q$ is also compact.
\medskip

(ii) We already know that $J_q= \left[ 0,\frac{1}{q-1}\right]$. 

Given an arbitrary non-degenerate subinterval $I$ of $\left[0,\frac{1}{q-1}\right]$, there exist a large integer $n$ and a block $a_1\cdots a_{n-1}\in\set{0,1}^{n-1}$ such that $\pi_q(c)\in I$ for all sequences $c$ starting with $a_1\cdots a_{n-1}$.
Consider the sequences 
\begin{equation*}
b:=a_1\cdots a_{n-1}0^{\infty},\quad c:=a_1\cdots a_{n-1}01^{\infty}\qtq{and} d:=a_1\cdots a_{n-1}10^{\infty}.
\end{equation*}
We have
\begin{equation*}
b<c<d
\qtq{and} 
\pi_q(b),\ \pi_q(c),\ \pi_q(d)\in I.
\end{equation*}
Furthermore, we have obviously $\pi_q(b)<\pi_q(c)$, and also $\pi_q(c)>\pi_q(d)$ because 
\begin{equation*}
\pi_q(d)-\pi_q(c)=\frac{1}{q^n}-\sum_{i=n+1}^{\infty}\frac{1}{q^i}=\frac{(q-2)}{q^n(q-1)}<0.
\end{equation*}
Hence $\pi_q$ is not monotone in $I$.
\medskip

(iii) The property $J_2=[0,1]$ is well known. 

If two sequences $c<d$ first differ at their $n$th digits, then 
\begin{equation*}
\pi_2(d)-\pi_2(c)\ge \frac{1}{2^n}-\sum_{i=n+1}^{\infty}\frac{1}{2^i}=0.
\end{equation*}
Hence $\pi_2$ is non-decreasing.

It is not (strictly) increasing because $\pi_2(10^{\infty})=\pi_2(01^{\infty})$. 
\medskip

(iv) If $p>2$, and two sequences $c<d$ first differ at their $n$th digits, then 
\begin{equation*}
\pi_q(d)-\pi_q(c)\ge \frac{1}{q^n}-\sum_{i=n+1}^{\infty}\frac{1}{q^i}=\frac{(q-2)}{q^n(q-1)}>0.
\end{equation*}
This proves the increasingness of $\pi_q$ and the  inequality  \eqref{22} . 

It follows from \eqref{21}  and \eqref{22}  that $\pi_q$ is a homeomorphism.

Finally, since $\set{0,1}^{\infty}$ has Hausdorff dimension $1$, using \eqref{21}  and \eqref{22}  we infer from the definition of the that Hausdorff dimension $J_q$ has Hausdorff dimension  $1/\log q<1$.
\end{proof}

Next we investigate the greedy and quasi-greedy maps $b_p:J_p\to\set{0,1}^{\infty}$ and $a_p:J_p\to\set{0,1}^{\infty}$, defined in the introduction.
We have 
\begin{equation*}
\pi_p(b_p(x))=\pi_p(a_p(x))=x\qtq{for all} x\in J_p
\end{equation*} 
by definition, i.e., $\pi_p$ is a left inverse of both $b_p$ and $a_p$.

\begin{proposition}\label{p24} 
We have 
\begin{equation*}
1<p<q\le 2\Longrightarrow R(b_p)\subsetneq R(b_q)\qtq{and} R(a_p)\subsetneq R(a_q),
\end{equation*}
and 
\begin{equation*}
R(b_p)=\set{0,1}^{\infty}\qtq{for all} p>2.
\end{equation*}
\end{proposition}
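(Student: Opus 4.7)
The plan is to treat the two statements separately. For $p>2$, every $x\in J_p$ has a unique expansion by the computation in the introduction, so given any $c\in\set{0,1}^{\infty}$, setting $x:=\pi_p(c)\in J_p$ forces $b_p(x)=c$ by uniqueness. Hence $R(b_p)=\set{0,1}^{\infty}$.

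For $1<p<q\le 2$, the strategy is to reduce both strict inclusions to the single lemma
\begin{equation*}
a_p(1)<a_q(1) \qtq{lexicographically.}
\end{equation*}
Once this is established, Parry's theorem directly yields $R(b_p)\subseteq R(b_q)$ and $R(a_p)\subseteq R(a_q)$, since the admissibility conditions involving $a_q(1)$ are strictly weaker than those involving $a_p(1)$. The strict containments will follow from the explicit witnesses $0\,a_p(1)\in R(b_q)\setminus R(b_p)$ and $0\,a_q(1)\in R(a_q)\setminus R(a_p)$: in both cases the Parry condition fails at the first digit for the $p$-range but holds for the $q$-range precisely because $a_p(1)<a_q(1)$, while at later positions the required inequalities transfer automatically from the Parry conditions on $a_p(1)$ and $a_q(1)$.

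It remains to prove the key lemma $a_p(1)<a_q(1)$, which will be the main obstacle. Write $\alpha_p:=a_p(1)$ and $\alpha_q:=a_q(1)$. The case $\alpha_p=\alpha_q$ is excluded because it would give $\sum (\alpha_p)_i(p^{-i}-q^{-i})=0$, forcing the zero sequence by $p<q$, contradicting $\pi_p(\alpha_p)=1$. Suppose for contradiction $\alpha_p>\alpha_q$ and let $k$ be the first index of disagreement, so $(\alpha_p)_k=1$ and $(\alpha_q)_k=0$. Since $\alpha_p$ is infinite and $p<q$, one has $\pi_q(\alpha_p)<\pi_p(\alpha_p)=1=\pi_q(\alpha_q)$; expanding both quantities along the common length-$(k-1)$ prefix then yields the strict inequality
\begin{equation*}
\pi_q((\alpha_q)_{k+1}(\alpha_q)_{k+2}\cdots) > 1.
\end{equation*}

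The closing step will derive the opposite bound to reach a contradiction. Since $(\alpha_q)_k=0$, Parry's theorem gives $(\alpha_q)_{k+1}(\alpha_q)_{k+2}\cdots\le\alpha_q$; moreover this shifted tail inherits Parry's admissibility condition from that of $\alpha_q$, so it equals $a_q(y)$ for some $y\in J_q$. A direct inspection of the quasi-greedy algorithm shows that $a_q:J_q\to\set{0,1}^{\infty}$ is strictly increasing (comparing the algorithms for $x<y$ digit by digit, the first divergence must give a $0$ on the $x$-side and a $1$ on the $y$-side), whence $\pi_q$ is lex-monotone on its image. Applying this monotonicity to $(\alpha_q)_{k+1}(\alpha_q)_{k+2}\cdots\le\alpha_q$ gives $\pi_q((\alpha_q)_{k+1}(\alpha_q)_{k+2}\cdots)\le\pi_q(\alpha_q)=1$, the required contradiction.
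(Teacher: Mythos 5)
Your proof is correct, but it follows a genuinely different route from the paper's. The paper characterizes the ranges analytically, namely $(c_i)\in R(b_p)$ iff $\sum_i c_{n+i}p^{-i}<1$ whenever $c_n=0$ (and with $\le$ for $R(a_p)$); the inclusions and the case $p>2$ then follow at once because these inequalities are monotone in $p$, while the \emph{strictness} of the inclusions is obtained by citing \cite[Theorem 2.7]{DevKom2011} (which even gives that $R(b_q)\setminus R(b_p)$ has $2^{\aleph_0}$ elements) together with a countability argument transferring this to $R(a_q)\setminus R(a_p)$. You instead work with the lexicographic (Parry) characterization, reduce everything to the new lemma $a_p(1)<a_q(1)$, and exhibit the explicit witnesses $0\,a_p(1)$ and $0\,a_q(1)$; your proof of the lemma (excluding equality by positivity of $p^{-i}-q^{-i}$, then deriving $\pi_q(\sigma^k a_q(1))>1$ from the assumed reverse ordering and contradicting it via admissibility of the shifted tail and the increasingness of $a_q$) is sound, and in fact the final contradiction could be reached even faster from the paper's analytic characterization of $R(a_q)$, which directly gives $\pi_q(\sigma^k a_q(1))\le 1$ when the $k$th digit vanishes. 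The trade-off: your argument is self-contained and constructive, whereas the paper's is shorter and yields the stronger cardinality statement $\abs{R(b_q)\setminus R(b_p)}=2^{\aleph_0}$ at the price of an external reference. Your treatment of the case $p>2$ via uniqueness of expansions is essentially the same observation as the paper's.
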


\begin{proof}
The inclusions $\subseteq$ and for $p>2$ the equalities follow from the definitions of the greedy and quasi-greedy expansions: 
\begin{align*}
(c_i)\in R(b_p) &\Longleftrightarrow \sum_{i=1}^{\infty}\frac{c_{n+i}}{p^i}<1\qtq{whenever} c_n=0,
\intertext{and} 
(c_i)\in R(a_p) &\Longleftrightarrow \sum_{i=1}^{\infty}\frac{c_{n+i}}{p^i}\le 1\qtq{whenever} c_n=0,
\end{align*}
because both inequalities remain valid if we change $p$ to a larger $q$.

It remains to show that for $1<p<q\le 2$ the sets $R(b_q)\setminus R(b_p)$ and $R(a_q)\setminus R(a_p)$ are non-empty. 
It follows from \cite[Theorem 2.7]{DevKom2011} that $R(b_q)\setminus R(b_p)$ has in fact $2^{\aleph_0}$ elements. 
Since the sets $R(b_q)\setminus R(a_q)$ and $R(a_p)\setminus R(b_p)$ are countable by Remark \ref{r12}, $R(a_q)\setminus R(a_p)$ has also $2^{\aleph_0}$ elements. 
\end{proof}

We recall that $J_p= \left[ 0,\frac{1}{p-1}\right]$ is an interval if $p\in (1,2]$.
Now we clarify the topological picture of $J_p$ for $p>2$.
We recall that each $x\in J_p$ has a unique expansion. 

\begin{proposition}\label{p25} \mbox{}
Let $p>2$ and $x\in J_p$.

\begin{enumerate}[\upshape (i)]
\item $x\in J_p$ is a right accumulation point of $J_p$ if and only if its unique expansion has infinitely many zero digits.
\item $x\in J_p$ is a left accumulation point of $J_p$ if and only if its unique expansion has infinitely many  one digits.
\item $J_p$ is a Cantor set, i.e., a non-empty compact set having neither interior, nor isolated points.
\end{enumerate}
\end{proposition}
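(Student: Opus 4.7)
The plan is to leverage the fact that by Proposition \ref{p23}(iv), when $p>2$ the map $\pi_p$ is an increasing homeomorphism between $\set{0,1}^{\infty}$ (with the lexicographic order) and $J_p$. In particular each $x\in J_p$ has a unique expansion $c=b_p(x)$, and comparisons between points of $J_p$ translate directly into lexicographic comparisons of their expansions.

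For part (i), I would prove the two implications by exhibiting explicit approximations and explicit separations. If $c=b_p(x)$ has infinitely many zero digits, then for each index $n$ with $c_n=0$ the sequence $d^{(n)}:=c_1\cdots c_{n-1}10^{\infty}$ satisfies $d^{(n)}>c$, so by monotonicity $\pi_p(d^{(n)})>x$, while
\begin{equation*}
\pi_p(d^{(n)})-x=\frac{1}{p^n}-\sum_{i=n+1}^{\infty}\frac{c_i}{p^i}\le \frac{1}{p^n},
\end{equation*}
and since there are infinitely many such $n$, these points of $J_p$ converge to $x$ from the right. Conversely, if $c_i=1$ for every $i>N$, then for any $y\in J_p$ with $y>x$, the expansion $d=b_p(y)$ first exceeds $c$ at some position $k\le N$ (with $d_k=1$, $c_k=0$), and the elementary estimate
\begin{equation*}
y-x\ge \frac{1}{p^k}-\sum_{i=k+1}^{\infty}\frac{1}{p^i}=\frac{p-2}{p^k(p-1)}\ge \frac{p-2}{p^N(p-1)}>0
\end{equation*}
gives a uniform gap, so $x$ is not a right accumulation point. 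Part (ii) is proved by the symmetric argument using sequences $c_1\cdots c_{n-1}01^{\infty}$ approximating $x$ from below.

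For part (iii), non-emptiness is trivial and compactness is Proposition \ref{p23}(i). The absence of isolated points is a direct corollary of (i) and (ii): every sequence in $\set{0,1}^{\infty}$ must contain either infinitely many zeros or infinitely many ones, so every $x\in J_p$ is either a right or a left accumulation point. For the absence of interior points I would invoke Proposition \ref{p23}(iv): since $J_p$ has Hausdorff dimension $1/\log p<1$, it is Lebesgue null and therefore cannot contain any non-degenerate interval; alternatively, one can note that $J_p$ is homeomorphic to the totally disconnected space $\set{0,1}^{\infty}$, which excludes non-trivial connected subsets.

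No single step is genuinely hard: the main subtlety is bookkeeping the sign in the one-sided estimates (making sure that ``larger in the lexicographic order'' gives ``to the right'', and that the tail correction $\sum p^{-i}=\frac{1}{p^n(p-1)}$ is strictly smaller than the leading $p^{-n}$ precisely because $p>2$, which is exactly the inequality already exploited in Proposition \ref{p23}(iv)). Once the sign and size of the perturbation are controlled, everything slots in mechanically.
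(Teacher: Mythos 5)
Your proof is correct and follows essentially the same route as the paper: approximating sequences $c_1\cdots c_{n-1}10^{\infty}$ at the zero digits, the uniform gap $\frac{p-2}{p^N(p-1)}$ when the zeros terminate, and for (iii) compactness, the null-set property, and the observation that every sequence has infinitely many equal digits. The only cosmetic difference is in (ii), where you rerun the symmetric argument directly while the paper reduces it to (i) via the reflection $x\mapsto\frac{1}{p-1}-x$, $(c_i)\mapsto(1-c_i)$; both are fine.
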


\begin{proof}
(i) If the unique expansion $(c_i)$ of $x$ has infinitely many zero digits $c_n=0$, then the formula 
\begin{equation*}
x_n:=\left( \sum_{i=0}^{n-1}\frac{c_i}{p^i}\right)+\frac{1}{p^n},
\end{equation*}
where $n$ runs over the integers for which $c_n=0$, defines a sequence $(x_n)$ converging to $x$ and satisfying $x_n>x$ for all $n$. 
Hence $x$ is  a right accumulation point.

Otherwise either $x=\max J_p$ or the unique expansion $(c_i)$ of $x$ has a last zero digit $c_m=0$. 
In the first case $x$ is obviously a right isolated point. 
In the second case the unique expansion of each $y\in J_p$, $y>x$ starts with some block $d_1\cdots d_m>c_1\cdots c_m$, so that 
\begin{equation*}
y-x\ge \frac{1}{p^m}-\sum_{i=m+1}^\infty\frac{1}{p^i}=\frac{p-2}{p^m(p-1)}>0.
\end{equation*}
Since the right side does not depend on the particular choice of $y$, we conclude that $x$ is  a right isolated point again.
\medskip

(ii) The statements (i) and (ii) are equivalent because $J_p$ is symmetric with respect to $\frac{1}{2(p-1)}$, and the expansion  of any $x\in J_p$ is the \emph{reflection} $(1-c_i)$ of the expansion $(c_i)$ of $\frac{1}{p-1}-x$.
\medskip

(iii) We already know that $J_p$ is a non-empty compact set.
We also know that it is a null set, hence $J_p$ has no interior points. 
Finally, since every sequence has infinitely many equal digits, all points of $J_p$ are accumulation points by (i) and (ii).
\end{proof}

We end this section by investigating the continuity of the maps $b_p$ and $a_p$.

\begin{proposition}\label{p26} \mbox{}
Let $p>1$ and $x\in J_p$.

\begin{enumerate}[\upshape (i)]
\item If $p\in (1,2]$, then $b_p$ is right continuous and $a_p$ is left continuous.
Furthermore, they are continuous in $x$ if and only if $b_p(x)=a_p(x)$. 
\item If $p>2$, then $b_p$ is a homeomorphism between $J_p$ and $\set{0,1}^{\infty}$. 
Moreover,
\begin{equation*}
c_1\abs{x-y}^{1/\log p} 
\le \rho(b_p(x),b_p(y))
\le c_2\abs{x-y}^{1/\log p}
\end{equation*}
for all $x,y\in J_p$ with 
\begin{equation*}
c_1:=\left(\frac{p-1}{p}\right)^{1/\log p} 
\qtq{and} 
c_2:=\left(\frac{p-1}{p-2}\right)^{1/\log p}.
\end{equation*}
\end{enumerate}
\end{proposition}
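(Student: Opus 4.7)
The plan is to treat the two parts with different techniques: a coordinate-wise analysis of the defining algorithms for (i), and a direct reduction to Proposition \ref{p23}(iv) for (ii).

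For part (i), I first record that $b_p$ and $a_p$ are lexicographically monotone: if $x,y\in J_p$ with $b_p(x),b_p(y)$ first disagreeing at position $n$ where $b_n(x,p)=1$ and $b_n(y,p)=0$, then letting $S$ denote the common partial sum through position $n-1$ one has $y<S+p^{-n}\le x$, so $x\le y$ forces $b_p(x)\le b_p(y)$ in lexicographic order; the same comparison, with strict and non-strict swapped, works for $a_p$. The right continuity of $b_p$ is then proved by a coordinate-wise induction: for $x_m\downarrow x$, assume the first $n-1$ greedy digits of $x_m$ agree with those of $x$ for all large $m$; the defining condition $b_n(x,p)=1\Leftrightarrow S+p^{-n}\le x$ is non-strict on the $1$-side and strict on the $0$-side, so the case $b_n(x,p)=1$ propagates to $x_m\ge x$ immediately, while if $b_n(x,p)=0$ the inequality $S+p^{-n}>x$ combined with $x_m\to x$ gives $S+p^{-n}>x_m$ eventually. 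By Proposition \ref{p21} this coordinate-wise convergence coincides with convergence in the metric topology. The same induction with strict and non-strict interchanged delivers left continuity of $a_p$ for $x_m\uparrow x$.

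For the iff statements, the key observation is that equality $S+p^{-n}=x$ at a $1$-step of the greedy recursion forces all later digits to vanish, so this equality occurs exactly when $b_p(x)$ is finite---equivalently when $a_p(x)\ne b_p(x)$ by Remark \ref{r12}. When $b_p(x)$ is infinite, the $1$-side inequality is always strict, so the induction above runs equally well from the left, giving left continuity of $b_p$; conversely, if $b_p(x)=d_1\cdots d_k 1 0^{\infty}$, then for $x_m\uparrow x$ the identity $\sum_{i=1}^k d_i/p^i+p^{-(k+1)}=x>x_m$ forces $b_{k+1}(x_m,p)=0$, which prevents $b_p(x_m)$ from converging to $b_p(x)$. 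A mirror analysis for $a_p$, in which the problematic position is the one where the quasi-greedy recursion chooses $0$ despite $S+p^{-n}=x$, shows that $a_p$ is right continuous at $x$ precisely when $a_p(x)=b_p(x)$, completing the iff for both maps.

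For part (ii), uniqueness of the expansion for $p>2$ makes $b_p$ a bijection from $J_p$ onto $\set{0,1}^{\infty}$, inverse to $\pi_p$. By Proposition \ref{p23}(iv), $\pi_p$ is an increasing homeomorphism, hence so is $b_p$. Substituting $c=b_p(x)$ and $d=b_p(y)$ into the two-sided estimate
\begin{equation*}
\frac{p-2}{p-1}\rho(c,d)^{\log p}\le \abs{\pi_p(c)-\pi_p(d)}\le \frac{p}{p-1}\rho(c,d)^{\log p}
\end{equation*}
from \eqref{21} and \eqref{22}, and solving for $\rho(b_p(x),b_p(y))$, produces exactly the stated constants $c_1$ and $c_2$. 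The main obstacle is the bookkeeping in part (i), where one must carefully track which instances of the defining inequality are strict, because this is precisely what separates continuity points from discontinuity points; part (ii) is essentially immediate once Proposition \ref{p23}(iv) is in hand.
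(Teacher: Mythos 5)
Your proof is correct, and part (ii) is exactly the paper's argument: $b_p=\pi_p^{-1}$ by uniqueness of expansions, so Proposition \ref{p23} (i) and (iv) give the homeomorphism, and inverting the two-sided H\"older bounds \eqref{21}, \eqref{22} yields the stated constants. For part (i) the positive statements also agree in substance --- the paper simply cites the greedy/quasi-greedy algorithm (and a lemma of de Vries--Komornik) where you write out the digit-by-digit induction --- but you handle the ``if and only if'' by a genuinely different mechanism. The paper's route is a lexicographic sandwich: for $x_k\searrow x$ it observes $b_p(x_k)\ge a_p(x_k)>b_p(x)\ge a_p(x)$, so the already-established convergence $b_p(x_k)\to b_p(x)$ squeezes out $a_p(x_k)\to b_p(x)$, whence $a_p$ is right continuous at $x$ precisely when $a_p(x)=b_p(x)$; symmetrically $b_p(x_k)\to a_p(x)$ for $x_k\nearrow x$. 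This buys more than the proposition asks: it identifies the one-sided limits as the \emph{other} expansion, a fact the paper reuses later (Propositions \ref{p33} and \ref{p38}) to compute the jumps of $b_{p,q}$ and $a_{p,q}$. Your route instead re-runs the induction from the deficient side and locates the unique position where the defining inequality degenerates to an equality ($S+p^{-n}=x$ at a $1$-step of the greedy recursion, resp.\ at a $0$-step of the quasi-greedy one), showing the induction survives exactly when $b_p(x)$ is infinite, resp.\ when $a_p(x)=b_p(x)$; your explicit non-convergence estimate in the finite case is essentially the paper's Remark \ref{r27} (i). Both are sound; yours is more self-contained, while the paper's is shorter and additionally yields the values of the one-sided limits.
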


\begin{proof}
(i) Let $p\in (1,2]$, and consider a sequence $(x_k)$ converging to $x$ in $J_p$. 

If $x_k>x$ for all $k$, then $b_p(x_k)\to b_p(x)$ coordinate-wise: this follows from the definition of the greedy algorithm (or see \cite[Lemma 2.5]{DevKom2011}). 
This shows that $b_p$ is right continuous.

Furthermore, since 
\begin{equation*}
b_p(x_k)\ge a_p(x_k)>b_p(x)\ge a_p(x),
\end{equation*}
we have also $a_p(x_k)\to b_p(x)$ coordinate-wise. 
Therefore $a_p$ is right continuous in $x$ if and only if $b_p(x)=a_p(x)$.

If $x_k<x$ for all $k$, then $a_p(x_k)\to a_p(x)$ coordinate-wise: this follows from the definition of the quasi-greedy algorithm (or see \cite[Lemma 2.3]{DevKom2011}). 
This shows that $a_p$ is left continuous.

Furthermore, since 
\begin{equation*}
a_p(x_k)\le b_p(x_k)<a_p(x)\le b_p(x),
\end{equation*}
it follows that $b_p(x_k)\to a_p(x)$ coordinate-wise. 
Therefore $b_p$ is left continuous in $x$ if and only if $b_p(x)=a_p(x)$.
\medskip

(ii) This follows from Proposition \ref{p23}  (i) and (iv) because $b_p$ is the inverse of the homeomorphism $\pi_p$.
\end{proof}

\begin{remarks}\label{r27} \mbox{}

\begin{enumerate}[\upshape (i)]
\item We may also give a direct proof of the discontinuity of $b_p$ and $a_p$ at $x$ when $b_p(x)=(b_i)$ has a last non-zero digit $b_m=1$. 
We show that $\rho(b_p(y),b_p(x))\ge 2^{-m}>0$ for all $y<x$, and $\rho(b_p(y),a_p(x))\ge 2^{-m}>0$ for all $y>x$.

Indeed, for $y<x$ we have $b_p(y)<b_1\cdots b_m0^{\infty}$ and therefore
\begin{equation*}
\rho(b_p(y),b_p(x))\ge 2^{-m}>0.
\end{equation*} 

Similarly, for $y>x$ we have $a_p(y)> b_1\cdots b_m0^{\infty}$.
Since $a_p(x)=(b_1\cdots b_{m-1}0)^{\infty}$,  this implies that $\rho(a_p(y),a_p(x))\ge 2^{-m}>0$.

\item By Remark \ref{r12} the discontinuities of $b_p$ and $a_p$ form a countable dense set in $J_p$.

\item If we endow $R(b_p)$ and $R(a_p)$ with the topology associated with the lexicographic order on these sets, then $b_p$ and $a_p$ remain homeomorphisms. 
This shows that this topology is different from the topology associated with the restriction of the metric $\rho$: the latter one is finer.

Indeed, the second part of the proof of Proposition \ref{p21} remains valid in each subset of $\set{0,1}^{\infty}$, but the first may fail.
Consider for example the open balls
\begin{align*}
&B_{1/2}(0^{\infty})=\set{c\in \set{0,1}^{\infty}\ :\ c<10^{\infty}}
\intertext{and}
&B_{1/2}(1^{\infty})=\set{c\in \set{0,1}^{\infty}\ :\ c>01^{\infty}}
\end{align*}
in $\set{0,1}^{\infty}$.

Then $B_{1/2}(0^{\infty})\cap R(a_p)$ is not open in the order topology of $R(a_p)$ because it has a maximal element $01^{\infty}$, and each order-neighborhood of $01^{\infty}$ contains larger sequences in $R(a_p)$.

Similarly, $B_{1/2}(1^{\infty})\cap R(b_p)$ is not open in the order topology of $R(b_p)$ because it has a minimal element $10^{\infty}$, and each order-neighborhood of $10^{\infty}$ contains smaller sequences in $R(b_p)$.
\end{enumerate}
\end{remarks}

\section{Monotonicity and continuity of base-change functions}\label{s3} 

In this section we investigate the \emph{base-change functions}
\begin{align*}
&b_{p,q}:=\pi_q\circ b_p:J_p\to J_q \qtq{for}p,q>1,
\intertext{and}
&a_{p,q}:=\pi_q\circ a_p:J_p\to J_q \qtq{for}p\in (1,2]\qtq{and}q>1.
\end{align*}
We exclude the trivial case $p=q$ where they are the identity maps of $J_p$.

First we study $b_{p,q}$. 

\begin{proposition}\label{p31} \mbox{}
Let $p,q>1$ with $p\ne q$. 

\begin{enumerate}[\upshape (i)]
\item If $q>\min\set{p,2}$, then $b_{p,q}$ is increasing.
\item If $q=2<p$, then  $b_{p,q}$ is non-decreasing.
\item In the remaining case $q<\min\set{p,2}$ the map $b_{p,q}$ is nowhere monotone.
\end{enumerate}
\end{proposition}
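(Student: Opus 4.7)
Proof plan:

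The starting point is that $b_p:J_p\to R(b_p)$ is strictly order-preserving for the lexicographic order on sequences, for every $p>1$. For $p>2$ this follows immediately from Proposition~\ref{p23}(iv), which identifies $\pi_p$ as an increasing homeomorphism, so $b_p=\pi_p^{-1}$ is order-preserving. For $p\in(1,2]$ it is a short telescoping argument using Parry's theorem: if one supposes $b_p(x)>b_p(y)$ lex with first difference at position $n$ (so $b_p(x)_n=1$, $b_p(y)_n=0$), then the Parry condition at the $0$-digit of $b_p(y)$---equivalently, the greedy algorithm itself---forces $\pi_p(b_p(y)_{n+1}b_p(y)_{n+2}\cdots)<1$, and summing yields $x>y$, contradicting $x<y$.

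With this lemma in hand, parts (i) and (ii) reduce to composition together with one direct estimate. When $p>2$, the hypothesis $q\ge 2$ makes $\pi_q$ non-decreasing by Proposition~\ref{p23}(iii)--(iv), strictly so when $q>2$, so $b_{p,q}=\pi_q\circ b_p$ inherits these properties; this covers (i) in the subcase $p>2$ and also (ii). When $p\in(1,2]$ and $q>p$, given $x<y$ in $J_p$ with first disagreement at position $n$ (so $b_p(x)_n=0$, $b_p(y)_n=1$), the termwise inequality $\pi_q\le\pi_p$ valid on $\set{0,1}^{\infty}$ for $q>p$, combined with the same Parry/greedy bound, gives
\begin{equation*}
b_{p,q}(y)-b_{p,q}(x)\ge\frac{1}{q^n}-\frac{1}{q^n}\pi_q\bigl(b_p(x)_{n+1}b_p(x)_{n+2}\cdots\bigr)>0.
\end{equation*}

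For part (iii) the argument splits according to whether $p>2$ or $p\in(1,2]$. If $p>2$ and $q<2$, the argument of Proposition~\ref{p23}(ii) transfers verbatim inside any cylinder $\set{y\in J_p\ :\ b_p(y)\text{ starts with }a_1\cdots a_{n-1}}$: since $R(b_p)=\set{0,1}^{\infty}$ by Proposition~\ref{p24}, the three sequences $a_1\cdots a_{n-1}0^{\infty}$, $a_1\cdots a_{n-1}01^{\infty}$, $a_1\cdots a_{n-1}10^{\infty}$ are all greedy, and the sign calculation from there exhibits three points inside the cylinder where $b_{p,q}$ is non-monotone. The main obstacle is $p\in(1,2]$ with $q<p$, where Parry's condition forbids inserting $01^{\infty}$ into an arbitrary greedy prefix. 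My plan is to exploit the jumps of $b_{p,q}$: by Remark~\ref{r12} the points $x^*$ whose greedy expansion $b_p(x^*)=b_1\cdots b_{m-1}10^{\infty}$ is finite (with last $1$ at position $m$) are dense in $J_p$, and at every such $x^*$ one has $a_p(x^*)=b_1\cdots b_{m-1}0\,a_p(1)$, so a direct calculation gives
\begin{equation*}
\lim_{y\uparrow x^*}b_{p,q}(y)=\pi_q(a_p(x^*))=b_{p,q}(x^*)+\frac{\pi_q(a_p(1))-1}{q^m}.
\end{equation*}
The decisive inequality $\pi_q(a_p(1))>\pi_p(a_p(1))=1$ (precisely where $q<p$ enters) shows that the left limit strictly exceeds $b_{p,q}(x^*)$, so $b_{p,q}(x^*-\delta)>b_{p,q}(x^*)$ for small $\delta>0$. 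On the other hand, right continuity of $b_p$ at $x^*$ (Proposition~\ref{p26}(i)) forces any $y>x^*$ close enough to have $b_p(y)$ agreeing with $b_p(x^*)$ through position $m$ and first deviating at some position $j>m$ with a $0\mapsto 1$ flip, whence $b_{p,q}(y)-b_{p,q}(x^*)\ge 1/q^j>0$. The triple $x^*-\delta<x^*<y$ then witnesses non-monotonicity inside any prescribed open subinterval of $J_p$ containing $x^*$.
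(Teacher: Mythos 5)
Your proposal is correct, and while parts (i)--(ii) follow essentially the paper's route, part (iii) is argued quite differently. For (i) and (ii) both you and the paper factor $b_{p,q}=\pi_q\circ b_p$ and combine the strict order-preservation of $b_p$ with the monotonicity of $\pi_q$; your telescoping estimate $b_{p,q}(y)-b_{p,q}(x)\ge q^{-n}\bigl(1-\pi_q(b_p(x)_{n+1}b_p(x)_{n+2}\cdots)\bigr)>0$ is just the unwrapped form of the paper's assertion that the restriction of $\pi_q$ to $R(b_p)\subseteq R(b_q)$ is increasing. For (iii) the paper gives one uniform construction valid for all $q<\min\set{p,2}$: it fixes an auxiliary base $r\in(q,\min\set{p,2})$, inserts the block $0\,a_r(1)$ after a greedy prefix $b_1\cdots b_{n-1}$, verifies via Parry's criterion that the result is still $p$-greedy, and compares it with $b_1\cdots b_{n-1}0^{\infty}$ and $b_1\cdots b_{n-1}10^{\infty}$, the decisive inequality being $\pi_q(a_r(1))>\pi_r(a_r(1))=1$. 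You instead split the cases: for $p>2$ you reuse the $01^{\infty}$ versus $10^{\infty}$ comparison of Proposition \ref{p23}(ii) inside a cylinder (legitimate since $R(b_p)=\set{0,1}^{\infty}$), and for $p\in(1,2]$ you exploit the dense set of points with finite greedy expansion, where the jump computation $\pi_q(a_p(x^*))-\pi_q(b_p(x^*))=q^{-m}(\pi_q(a_p(1))-1)>0$ --- precisely the content of the paper's Remark \ref{r35}, established later in the proof of Proposition \ref{p33} --- combined with right continuity produces a strict local minimum of $b_{p,q}$ at $x^*$, hence a non-monotone triple in any prescribed subinterval. Your route avoids the somewhat delicate Parry verification that the inserted sequence is $p$-greedy, at the cost of importing the one-sided limit analysis; the paper's route is uniform in $p$ and self-contained at that point of the text. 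Both arguments are sound.
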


\begin{proof}
(i) If $q>2$, then both $b_p$ and $\pi_q$ are increasing, hence $b_{p,q}=\pi_q\circ b_p$ is also increasing.

If $q>p$, then both $b_p$ and the restriction of $\pi_q$ to $R(b_p)\subseteq R(b_q)$ are increasing, hence $b_{p,q}=\pi_q\circ b_p$ is also increasing.
\medskip

(ii) If $q=2<p$, then $b_p$ is increasing and $\pi_q$ is non-decreasing, so that $b_{p,q}=\pi_q\circ b_p$ is non-decreasing. It is not increasing, however, because for example $\pi_2(10^{\infty})=\pi_2(01^{\infty})$, whence $b_{p,2}$ takes the same value at the points
$b_p^{-1}(01^{\infty})<b_p^{-1}(10^{\infty})$.
\medskip

(iii) Assume henceforth that $q<\min\set{p,2}$, and fix an arbitrary open interval $I\subseteq [0,\frac{1}{p-1}]$ such that $I\cap J_p\ne\varnothing$.

Fix a point $x\in I\cap J_p$ whose greedy expansion $b_p(x)=(b_i)$ is infinite, and choose a large integer $n$ such that $b_n=1$, and $\pi_p(c)\in I$ for all $p$-greedy sequences $c$ starting with $b_1\cdots b_{n-1}$.

Furthermore, fix a number $r\in (q,\min\set{p,2})$ and consider the quasi-greedy expansion $(\alpha_i):=a_r(1)$. (It is well defined because $r\in(1,2)$.)

Since $r<p$, 
\begin{equation*}
b_1\cdots b_{n-1}0^{\infty}<b_1\cdots b_{n-1}0\alpha_1\alpha_2\cdots<b_1\cdots b_{n-1}10^{\infty}
\end{equation*}
are greedy expansions in base $p$ of suitable numbers $x<y<z$ (we use here the increasingness of the map $x\mapsto b_p(x)$).
Since $x,y,z\in I$ by the choice of $n$, the proof will be completed by showing that 
$\pi_q(y)>\pi_q(x)$ and $\pi_q(y)>\pi_q(z)$.

The first relation is obvious: 
\begin{equation*}
\pi_q(y)-\pi_q(x)=\sum_{i=1}^{\infty}\frac{\alpha_i}{q^{n+i}}>0.
\end{equation*}
The second relation follows from our assumption $q<r$: 
\begin{equation*}
q^n\left(\pi_q(y)-\pi_q(z)\right)=\left(\sum_{i=1}^{\infty}\frac{\alpha_i}{q^i}\right)-1 >\left(\sum_{i=1}^{\infty}\frac{\alpha_i}{r^i}\right)-1 =0.\qedhere
\end{equation*}
\end{proof}

\begin{corollary}\label{c32} \mbox{}
Let $p,q>1$ with $p\ne q$. 

\begin{enumerate}[\upshape (i)]
\item $b_{p,q}$ is injective if and only if $q>\min\set{p,2}$.
\item $b_{p,q}$ is onto if and only if $p>\min\set{q,2}$.
\item $b_{p,q}$ is bijective if and only if $\min\set{p,q}>2$.
\end{enumerate}
\end{corollary}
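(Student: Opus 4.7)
The plan is to reduce each statement to what is already known. I would treat (i) and (ii) separately, and then deduce (iii) by combining them: $b_{p,q}$ is bijective iff it is both injective and onto, and a direct case check on how many of $p,q$ exceed $2$ shows that the conjunction of $q>\min\set{p,2}$ with $p>\min\set{q,2}$ is equivalent to $\min\set{p,q}>2$.

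For (i), the ``if'' direction is a direct consequence of Proposition~\ref{p31}(i): when $q>\min\set{p,2}$ the function $b_{p,q}$ is strictly increasing and therefore injective. For the converse I would distinguish two regimes. When $q=2<p$, I reuse the observation from the proof of Proposition~\ref{p31}(ii): the sequences $01^{\infty}$ and $10^{\infty}$ both lie in $R(b_p)=\set{0,1}^{\infty}$ and share the common $\pi_2$-value $1/2$, so $b_{p,q}$ takes the same value at the two distinct points $\pi_p(01^{\infty})$ and $\pi_p(10^{\infty})$. When $q<\min\set{p,2}$, i.e.\ $q<p$ and $q<2$, Proposition~\ref{p24} (together with $R(b_p)=\set{0,1}^{\infty}$ when $p>2$) furnishes a sequence $(b_i)\in R(b_p)\setminus R(b_q)$. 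Setting $y:=\pi_q((b_i))\in J_q$ and $(c_i):=b_q(y)$, the $q$-greediness of $(c_i)$ forces $(c_i)\in R(b_q)\subseteq R(b_p)$ and $(c_i)\ne(b_i)$, so the bijectivity of $\pi_p$ on $R(b_p)$ produces two distinct points $\pi_p((b_i))$ and $\pi_p((c_i))$ of $J_p$ that are both mapped to $y$ by $b_{p,q}$.

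For (ii), the ``if'' direction amounts to verifying $R(b_q)\subseteq R(b_p)$ under the hypothesis $p>\min\set{q,2}$: this is trivial when $p>2$ (since then $R(b_p)=\set{0,1}^{\infty}$) and follows from Proposition~\ref{p24} when $p\le 2$ and $p>q$. From $R(b_q)\subseteq R(b_p)$ one obtains $\pi_q(R(b_p))\supseteq\pi_q(R(b_q))=J_q$, whence surjectivity. For the converse I assume $p<\min\set{q,2}$, i.e.\ $p<q$ and $p<2$. If $q\le 2$ as well, Proposition~\ref{p24} yields $(c_i)\in R(b_q)\setminus R(b_p)$; then $y:=\pi_q((c_i))$ has no preimage in $R(b_p)$, because any such preimage $(d_i)\in R(b_p)\subseteq R(b_q)$ would be a $q$-greedy expansion of $y$ and must therefore equal $(c_i)$, a contradiction. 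If instead $q>2$, Proposition~\ref{p23}(iv) makes $\pi_q$ a bijection of $\set{0,1}^{\infty}$ onto $J_q$, and combined with $R(b_p)\subsetneq \set{0,1}^{\infty}$ this forces $\pi_q(R(b_p))\subsetneq J_q$.

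The one slightly delicate point is the non-injectivity claim when $q<\min\set{p,2}$: Proposition~\ref{p31}(iii) only gives nowhere monotonicity, which does not by itself imply failure of injectivity, so one genuinely has to exhibit explicit coincidences, and the clean way is to exploit the gap $R(b_p)\setminus R(b_q)$ together with the uniqueness of $q$-greedy expansions as above.
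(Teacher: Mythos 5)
Your proposal is correct and, at the level of strategy, matches the paper's: part (i) is reduced to Proposition~\ref{p31}, part (ii) to the identity ``range of $b_{p,q}$ equals $\pi_q(R(b_p))$'' together with Proposition~\ref{p24}, and part (iii) is obtained by intersecting (i) and (ii). Where you genuinely diverge is the non-injectivity claim for $q<\min\set{p,2}$. The paper disposes of all of (i) with the single sentence ``Part (i) follows from the preceding proposition,'' implicitly treating nowhere-monotonicity as settling non-injectivity; as you rightly observe, that implication is not automatic for a map that is only right continuous (the proof of Proposition~\ref{p31}(iii) produces $x<y<z$ with the value at $y$ exceeding the values at $x$ and $z$, which is not a coincidence of values). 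Your substitute argument --- pick $(b_i)\in R(b_p)\setminus R(b_q)$, note that $y:=\pi_q((b_i))$ also has the $q$-greedy expansion $b_q(y)\in R(b_q)\subseteq R(b_p)$ distinct from $(b_i)$, and pull both sequences back through the injective map $\pi_p|_{R(b_p)}$ --- is correct and supplies exactly the detail the paper leaves unsaid. Your treatment of (ii) is likewise a more explicit version of the paper's ``onto iff $R(b_q)\subseteq R(b_p)$''; your appeal to the uniqueness of $q$-greedy expansions is precisely what makes the ``only if'' half of that equivalence legitimate.

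One small slip: in the converse of (ii) you replace the negation of $p>\min\set{q,2}$ by $p<\min\set{q,2}$, which omits the boundary case $p=2<q$. Your own argument for the subcase $q>2$ covers it verbatim, since $R(b_2)\subsetneq\set{0,1}^{\infty}$ (a sequence ending in $01^{\infty}$ is never $2$-greedy, as $\pi_2(01^{\infty})=\pi_2(10^{\infty})$), so this is a one-line repair rather than a real defect.
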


\begin{proof}
Part (i) follows from the preceding proposition. 

Since $\pi_q$ is a bijection between $R(b_q)$ and $J_q$, $b_{p,q}=\pi_q\circ b_p$ is onto if and only if $R(b_q)\subseteq R(b_p)$. 
In view of Proposition \ref{p24}  this is equivalent to the condition $p>\min\set{q,2}$.
This proves (ii).

Parts (i) and (ii) imply (iii).
\end{proof}

Next we investigate the continuity:

\begin{proposition}\label{p33} \mbox{}
Let $p,q>1$ with $p\ne q$. 

\begin{enumerate}[\upshape (i)]
\item If $p,q>2$, then $b_{p,q}$ is a homeomorphism.
\item If $p>2$, then $b_{p,q}$ is Hölder continuous with the exact exponent $\frac{\log q}{\log p}$. 
\item If $p\in(1,2]$, then $b_{p,q}$ is right continuous.
It is left continuous in $x\in (0,\frac{1}{p-1}]$ if and only if $b_p(x)$ has infinitely many $1$ digits.
\end{enumerate}
\end{proposition}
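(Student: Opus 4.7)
For part (i), the map $b_{p,q} = \pi_q \circ b_p$ is a composition of two homeomorphisms when $p, q > 2$: $b_p: J_p \to \set{0,1}^{\infty}$ by Proposition \ref{p26}(ii), and $\pi_q: \set{0,1}^{\infty} \to J_q$ by Proposition \ref{p23}(iv), so the conclusion is immediate. The Hölder upper bound in part (ii) is obtained by chaining the estimate $\abs{\pi_q(c) - \pi_q(d)} \le \tfrac{q}{q-1}\rho(c,d)^{\log q}$ from Proposition \ref{p23}(i) with $\rho(b_p(x), b_p(y)) \le c_2\abs{x-y}^{1/\log p}$ from Proposition \ref{p26}(ii), yielding the exponent $\log q/\log p$. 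For sharpness, the plan is to test at $x = 0$ and $y_n := 1/p^n$: both lie in $J_p$ (with unique expansions $0^{\infty}$ and $0^{n-1}10^{\infty}$), so $b_{p,q}(y_n) = 1/q^n$, and the quotient $\abs{b_{p,q}(y_n) - b_{p,q}(0)}/\abs{y_n}^{\alpha} = (p^{\alpha}/q)^n$ blows up as soon as $\alpha > \log q/\log p$.

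For part (iii), right continuity of $b_{p,q}$ follows at once by composing the right continuity of $b_p$ (Proposition \ref{p26}(i)) with the continuity of $\pi_q$ (Proposition \ref{p23}(i)). The ``if'' direction for left continuity is equally easy: when $b_p(x)$ contains infinitely many $1$ digits we have $a_p(x) = b_p(x)$, so by Proposition \ref{p26}(i) the map $b_p$, and hence $b_{p,q}$, is left continuous at $x$.

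The main obstacle is the converse direction in (iii). Suppose $b_p(x)$ has its last $1$ digit at position $M$, so $b_p(x) = b_1\cdots b_{M-1}10^{\infty}$; running the quasi-greedy algorithm then produces
\begin{equation*}
a_p(x) = b_1\cdots b_{M-1}0\alpha_1\alpha_2\cdots, \qquad (\alpha_i):=a_p(1).
\end{equation*}
For any $x_k \in J_p$ with $x_k < x$ and $x_k \to x$, Proposition \ref{p26}(i) yields $b_p(x_k) \to a_p(x)$ coordinate-wise, whence $b_{p,q}(x_k) \to \pi_q(a_p(x))$ by continuity of $\pi_q$. The crux is to verify
\begin{equation*}
\pi_q(b_p(x)) - \pi_q(a_p(x)) = \frac{1}{q^M}\left(1 - \sum_{i=1}^{\infty} \frac{\alpha_i}{q^i}\right) \ne 0,
\end{equation*}
which reduces to $S(q) \ne 1$ for $q \ne p$, where $S(t):=\sum_{i\ge 1}\alpha_i/t^i$. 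Since $(\alpha_i)$ is not identically zero, $S$ is strictly decreasing on $(1,\infty)$; combined with the normalization $S(p) = \pi_p(a_p(1)) = 1$, this forces $S(q) \ne 1$ for every $q \ne p$, as needed.
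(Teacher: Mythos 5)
Your proof is correct and follows essentially the same route as the paper: the positive statements in (i)--(iii) come from composing Propositions \ref{p23} and \ref{p26}, and the failure of left continuity at a point with finite greedy expansion is detected by the jump $\frac{1}{q^n}\bigl(1-\sum_{i\ge1}\alpha_i/q^i\bigr)$ with $(\alpha_i)=a_p(1)$, exactly as in the paper. The only differences are that you spell out two details the paper leaves implicit — the sharpness of the H\"older exponent via the test pair $x=0$, $y_n=1/p^n$, and the strict monotonicity of $t\mapsto\sum_{i\ge1}\alpha_i t^{-i}$ justifying the key ``$\ne$'' — and both of these are correct.
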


\begin{remark}\label{r34} 
For $q>p>2$ the Hölder exponent is greater than one. 
This is possible because $b_p$ is not defined on an interval.
\end{remark}

\begin{proof}[Proof of Proposition \ref{p33}]
Propositions \ref{p23} and \ref{p26} imply most statements by  the continuity of composite functions.
It remains to prove that if  $p\in (1,2]$ and $(b_i):=b_p(x)$ has a last non-zero digit $b_n=1$, then $b_{p,q}$ is not left continuous in $x$. 

Setting $(\alpha_i):=a_p(1)$ for brevity we have
\begin{equation*}
b_p(x)=b_1\cdots b_{n-1}b_n0^{\infty}\qtq{and}a_p(x)=b_1\cdots b_{n-1}(b_n-1)\alpha_1\alpha_2\cdots.
\end{equation*}
Furthermore, if $y<x$ and $y\to x$, then  $b_p(y)\to a_p(x)$ (see the proof of Proposition \ref{p26} (i)), 
and therefore
\begin{align*}
b_{p,q}(x)-\lim_{y\nearrow x}b_{p,q}(y)
&=\frac{1}{q^n}\left(1-\sum_{i=1}^{\infty}\frac{\alpha_i}{q^i}\right)\\
&\ne \frac{1}{q^n}\left(1-\sum_{i=1}^{\infty}\frac{\alpha_i}{p^i}\right)\\
&=0.\qedhere
\end{align*}
\end{proof}

\begin{remark}\label{r35}
The last proof shows that if $1<q<p\le 2$ and $b_p(x)$ has a last nonzero digit, then 
\begin{equation}\label{31}
\lim_{y\nearrow x}b_{p,q}(y)>b_{p,q}(x).
\end{equation}
\end{remark}

Next we consider the same questions for the functions $a_{p,q}$.
Now we have $p\in (1,2]$ by definition.

\begin{proposition}\label{p36} \mbox{}
Let $p\in (1,2]$ and $q>1$ with $p\ne q$. 

\begin{enumerate}[\upshape (i)]
\item If $q>p$, then $a_{p,q}$ is increasing.
\item If $q<p$ then $a_{p,q}$ is nowhere monotone.
\end{enumerate}
\end{proposition}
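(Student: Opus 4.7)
The plan is to handle the two parts separately, each by adapting the corresponding argument of Proposition~\ref{p31} from the greedy to the quasi-greedy setting.

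For (i), I observe that $a_p:J_p\to R(a_p)$ is strictly increasing in the lexicographic order. Indeed, if $x<y$ and $a_p(x),\,a_p(y)$ first disagree at position $n$ (they must, since $\pi_p\circ a_p=\mathrm{id}$), then the common partial sum $S$ up to index $n-1$ yields $a_p(x)_n=1\iff S+p^{-n}<x$ and similarly for $y$, so $x<y$ forces $a_p(x)_n=0$ and $a_p(y)_n=1$. When $p<q\le 2$, Proposition~\ref{p24} gives $R(a_p)\subseteq R(a_q)$ and the restriction of $\pi_q$ to $R(a_q)$ is strictly lex-increasing as the inverse of $a_q$; when $q>2$, Proposition~\ref{p23}(iv) shows $\pi_q$ is already a strictly lex-increasing bijection on $\set{0,1}^\infty$. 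Composing, $a_{p,q}=\pi_q\circ a_p$ is strictly increasing.

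For (ii), I mimic the three-point construction in the proof of Proposition~\ref{p31}(iii), but now aim for a strict local \emph{minimum} of $a_{p,q}$. Since $q<p\le 2$ forces $q\in(1,2)$, I choose $r\in(q,p)$ close enough to $q$ that with $(\alpha_i^s):=a_s(1)$, the quantities
\begin{equation*}
A:=\sum_{i=1}^\infty\frac{\alpha_i^p}{q^i}\quad\text{and}\quad B:=\sum_{i=1}^\infty\frac{\alpha_i^r}{q^i}
\end{equation*}
satisfy $1<B<A$: indeed $B\to 1^+$ as $r\to q^+$, while $A>1$ is fixed. Given an open interval $I\subseteq[0,\tfrac{1}{p-1}]$ with $I\cap J_p\ne\varnothing$, I select $x\in I\cap J_p$ whose greedy expansion $(b_i)$ has two ones at positions $k<n$ with $b_{k+1}=\cdots=b_{n-1}=0$ and $q^{n-k}(A-1)>B$, while every greedy sequence starting with $b_1\cdots b_{n-1}$ still lies in $I$. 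Setting
\begin{equation*}
y_1:=\pi_p(b_1\cdots b_{n-1}0^\infty),\ y_2:=\pi_p(b_1\cdots b_{n-1}0\alpha_1^r\alpha_2^r\cdots),\ y_3:=\pi_p(b_1\cdots b_{n-1}10^\infty),
\end{equation*}
Parry-type checks (crucially using $\alpha^r<\alpha^p$ at the $0$ inserted at position $n$) identify
\begin{equation*}
a_p(y_1)=b_1\cdots b_{k-1}0\alpha^p,\quad a_p(y_2)=b_1\cdots b_{n-1}0\alpha^r,\quad a_p(y_3)=b_1\cdots b_{n-1}0\alpha^p.
\end{equation*}
Substituting and using $b_k=1$ together with $b_{k+1}=\cdots=b_{n-1}=0$ yields
\begin{equation*}
a_{p,q}(y_1)-a_{p,q}(y_2)=\frac{A-1}{q^k}-\frac{B}{q^n}>0,\qquad a_{p,q}(y_3)-a_{p,q}(y_2)=\frac{A-B}{q^n}>0,
\end{equation*}
so $y_2$ is a strict local minimum of $a_{p,q}$ on $\set{y_1,y_2,y_3}$, and $a_{p,q}$ is not monotone on $I$.

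The main obstacle I foresee is producing such an $x\in I\cap J_p$ with the prescribed long zero block between two consecutive ones in $b_p(x)$. By Remark~\ref{r12} the points whose greedy expansion ends in $10^\infty$ are dense in $J_p$, and among them one can freely insert a long zero block before the terminal one: the Parry condition for a sequence of the form $c_1\cdots c_K0^M10^\infty$ reduces to the facts that any tail beginning with $0$ is $<\alpha^p$ (since $\alpha_1^p=1$) and that $10^\infty<\alpha^p$ (since $\alpha^p$, being infinite, has a $1$ at some position $\ge 2$); with appropriate choice of the prefix $c_1\cdots c_K$ one arranges that $x$ falls inside $I$ for $M$ arbitrarily large. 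The implication $\alpha_n^r=1\Rightarrow\alpha_n^p=1$ valid for $r<p$, which drives both the strict lex inequality $\alpha^r<\alpha^p$ and the sign of $A-B$, is precisely the leverage that makes the difference between greedy (Proposition~\ref{p31}) and quasi-greedy (the present proposition) work out with $y_2$ as a local minimum rather than a maximum.
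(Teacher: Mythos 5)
Your part (i) is essentially the paper's argument: factor $a_{p,q}=\pi_q\circ a_p$ through $R(a_p)\subseteq R(a_q)$ and use that $\pi_q$ is increasing there (respectively on all of $\set{0,1}^{\infty}$ when $q>2$); you merely spell out the increasingness of $a_p$. Part (ii) is a genuinely different construction. The paper stays entirely inside base $p$: with $(\alpha_i)=a_p(1)$ it exhibits the three quasi-greedy sequences $a_1\cdots a_{n-1}000\alpha$, $a_1\cdots a_{n-1}00\alpha$ and $a_1\cdots a_{n-1}010^m\alpha$, the long zero block $0^m$ being inserted \emph{after} the extra $1$ of the largest point so that the middle point becomes a local \emph{maximum}; the only quantitative input is $(1-q^{-m})\pi_q(\alpha)>1$. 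You instead import the auxiliary base $r\in(q,p)$ from the proof of Proposition \ref{p31}(iii), place the long zero block in the \emph{prefix} of a finite greedy expansion, and make the middle point a local \emph{minimum}; the price is the bookkeeping $q^{n-k}(A-1)>B$ and the need to manufacture points of $I\cap J_p$ with a prescribed long gap between consecutive $1$'s. Both routes work; the paper's needs no auxiliary base and no special structure of $x$.

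Two justifications in your write-up need repair, though neither is fatal. First, the implication $\alpha^r_n=1\Rightarrow\alpha^p_n=1$ that you present as the engine of the argument is false in general: $r<p$ gives $a_r(1)<a_p(1)$ only lexicographically, not coordinate-wise (compare the self-admissible sequences $(110)^{\infty}$ and $(111000)^{\infty}$, which disagree in your direction at position $4$). Fortunately you never actually need coordinate-wise domination: $A>B$ you already derived correctly from $B\to 1^+$ as $r\to q^+$ while $A>1$ is fixed, and the lexicographic inequality $\alpha^r<\alpha^p$ has a one-line proof (if the sequences first differed with $\alpha^r_n=1>\alpha^p_n=0$, then $\sum_{i<n}\alpha^r_ir^{-i}+r^{-n}>\sum_{i<n}\alpha^p_ip^{-i}+p^{-n}\ge 1$ would contradict the quasi-greedy choice $\alpha^r_n=1$ in base $r$). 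Second, your Parry check for $c_1\cdots c_K0^M10^{\infty}$ only treats tails beginning at positions $>K$; for $j\le K$ with $c_j=0$ the tail $c_{j+1}\cdots c_K0^M10^{\infty}$ need not start with $0$. It is still $<a_p(1)$ once $M$ exceeds a bound depending on $c_1\cdots c_K$: either the comparison with $a_p(1)$ is already decided inside the block $c_{j+1}\cdots c_K$, or one must compare $0^M10^{\infty}$ with a shift of $a_p(1)$, which begins $0^s1$ for some finite $s$, and $M\ge s$ suffices. So the construction goes through for all large $M$, but this case has to be addressed explicitly.
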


\begin{proof}
(i) If $q>p$, then both $a_p$ and the restriction of $\pi_q$ to $R(a_p)\subseteq R(a_q)$ are increasing, hence $a_{p,q}=\pi_q\circ a_p$ is also increasing.
\medskip

(ii) Let $q<p$, and fix an arbitrary open interval $I\subseteq [0,\frac{1}{p-1}]$ such that $I\cap J_p\ne\varnothing$.

Fix an arbitrary point $x\in I\cap J_p$, write $(a_i)=a_p(x)$ for brevity, and choose a large integer $n$ such that $a_n=1$, and $\pi_p(c)\in I$ for all $p$-quasi-greedy sequences $c$ starting with $a_1\cdots a_{n-1}0$.

Next write $(\alpha_i)=a_p(1)$ for brevity, and choose a large integer $m$ such that 
\begin{equation*}
10^m\alpha_1\alpha_2\cdots < \alpha_1\alpha_2\cdots
\qtq{and} 
\left(1-\frac{1}{q^m}\right)\sum_{i=1}^{\infty}\frac{\alpha_i}{q^i}>1.
\end{equation*}
This is possible because $\alpha_1=1$, $\alpha_1\alpha_2\cdots>10^{\infty}$, and
\begin{equation*}
\sum_{i=1}^{\infty}\frac{\alpha_i}{q^i}>\sum_{i=1}^{\infty}\frac{\alpha_i}{p^i}=1.
\end{equation*}

It follows from the lexicographic characterization of quasi-greedy expansions and from our first assumption on $m$ that 
\begin{equation*}
a_1\cdots a_{n-1}000\alpha_1\alpha_2\cdots<a_1\cdots a_{n-1}00\alpha_1\alpha_2\cdots<a_1\cdots a_{n-1}010^m\alpha_1\alpha_2\cdots
\end{equation*}
are quasi-greedy expansions in base $p$ of suitable numbers $x<y<z$ (we use here the increasingness of the map $x\mapsto a_p(x)$).
Since $x,y,z\in I$ by the choice of $n$, the proof will be completed by showing that 
$\pi_q(y)>\pi_q(x)$ and $\pi_q(y)>\pi_q(z)$.

The first relation is obvious: 
\begin{equation*}
\pi_q(y)-\pi_q(x)=\left(\frac{1}{q^{n+1}}-\frac{1}{q^{n+2}}\right)\sum_{i=1}^{\infty}\frac{\alpha_i}{q^i}>0.
\end{equation*}
The second relation follows from our second assumption on $m$:
\begin{equation*}
q^{n+1}\left(\pi_q(y)-\pi_q(z)\right)=\left(\left(1-\frac{1}{q^m}\right)\sum_{i=1}^{\infty}\frac{\alpha_i}{q^i}\right)-1 >0.\qedhere
\end{equation*}
\end{proof}

\begin{corollary}\label{c37} \mbox{}
Let $p\in (1,2]$ and $q>1$ with $p\ne q$. 

\begin{enumerate}[\upshape (i)]
\item $a_{p,q}$ is injective if and only if $q>p$.
\item $a_{p,q}$ is onto if and only if $p>q$.
\item $a_{p,q}$ is never bijective.
\end{enumerate}
\end{corollary}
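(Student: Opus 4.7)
The plan is to combine the monotonicity dichotomy of Proposition \ref{p36} with the range inclusion $R(a_p)\subsetneq R(a_q)$ for $p<q\le 2$ from Proposition \ref{p24}, exploiting three easy facts: $a_p:J_p\to R(a_p)$ is a bijection whose inverse is $\pi_p|_{R(a_p)}$; the restriction $\pi_q|_{R(a_q)}$ is a bijection onto $J_q$; and for $q>2$ the map $\pi_q$ is injective on all of $\set{0,1}^{\infty}$ by Proposition \ref{p23}(iv). Part (iii) will then fall out of (i) and (ii) immediately, since the conditions $q>p$ and $p>q$ are mutually exclusive.

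For (i), if $q>p$ then $a_{p,q}$ is increasing by Proposition \ref{p36}(i), hence injective. Conversely, suppose $q<p$; Proposition \ref{p24} (with the roles of $p$ and $q$ reversed) furnishes some $c\in R(a_p)\setminus R(a_q)$. Set $y:=\pi_q(c)\in J_q$ and $d:=a_q(y)\in R(a_q)\subset R(a_p)$; since $c\notin R(a_q)$ but $d\in R(a_q)$ we have $c\ne d$, so the points $x:=\pi_p(c)$ and $x':=\pi_p(d)$ of $J_p$ are distinct. Both satisfy
\begin{equation*}
a_{p,q}(x)=\pi_q(c)=y=\pi_q(a_q(y))=\pi_q(d)=a_{p,q}(x'),
\end{equation*}
so $a_{p,q}$ is not injective.

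For (ii), if $p>q$ then $R(a_q)\subset R(a_p)$, and for any $y\in J_q$ the point $x:=\pi_p(a_q(y))\in J_p$ satisfies $a_p(x)=a_q(y)$, so $a_{p,q}(x)=\pi_q(a_q(y))=y$. Conversely, assume $p<q$. When $q\le 2$, Proposition \ref{p24} gives $d\in R(a_q)\setminus R(a_p)$; any pre-image $x\in J_p$ of $y:=\pi_q(d)$ would yield $c:=a_p(x)\in R(a_p)\subset R(a_q)$ with $\pi_q(c)=\pi_q(d)$, forcing $c=a_q(y)=d$ by injectivity of $\pi_q|_{R(a_q)}$, which contradicts $c\in R(a_p)$ and $d\notin R(a_p)$. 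When $q>2$, any finite sequence such as $c=10^{\infty}$ lies in $\set{0,1}^{\infty}\setminus R(a_p)$, and by injectivity of $\pi_q$ on $\set{0,1}^{\infty}$ the number $\pi_q(c)\in J_q$ lies outside $\pi_q(R(a_p))$.

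The main subtlety appears in the case $p<q\le 2$ of (ii): because $\pi_q$ is not injective on $\set{0,1}^{\infty}$, the strict inclusion $R(a_p)\subsetneq R(a_q)$ does not transfer directly to image sets in $J_q$. The rescue is that $\pi_q$ remains injective on the smaller set $R(a_q)$, which pins down any would-be pre-image in $R(a_p)$ to the quasi-greedy expansion $a_q(y)$ and delivers the desired contradiction.
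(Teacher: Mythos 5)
Your proof is correct, and its skeleton matches the paper's: injectivity for $q>p$ from the monotonicity in Proposition \ref{p36}, surjectivity via the identification $a_{p,q}(J_p)=\pi_q(R(a_p))$ together with Proposition \ref{p24}, and part (iii) from the incompatibility of $q>p$ and $p>q$. Where you genuinely diverge is in the two negative directions, and in both places your version is more complete than the paper's. For non-injectivity when $q<p$ the paper only says that part (i) ``readily follows'' from Proposition \ref{p36}, i.e.\ from $a_{p,q}$ being nowhere monotone; but nowhere monotone does not formally imply non-injective for a discontinuous function, whereas your argument --- take $c\in R(a_p)\setminus R(a_q)$, pair it with $d:=a_q(\pi_q(c))\in R(a_q)\subseteq R(a_p)$, and use the injectivity of $\pi_p$ restricted to $R(a_p)$ to turn $c\ne d$ into two distinct points of $J_p$ with equal images --- exhibits an explicit collision. (That last conversion is essential, since $\pi_p$ itself is far from injective for $p\le 2$, and you correctly supplied it.) For non-surjectivity when $p<q$ the paper invokes Proposition \ref{p24} uniformly, although $R(a_q)$ is only defined there for $q\le 2$; your split into $q\le 2$ (where the injectivity of $\pi_q$ on $R(a_q)$ pins the would-be preimage down to $a_q(y)$) and $q>2$ (where $10^{\infty}\notin R(a_p)$ because quasi-greedy expansions are infinite, combined with Proposition \ref{p23}(iv)) is the cleaner reading. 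In short: same route, but with the two gaps in the paper's three-line proof filled in.
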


\begin{proof}
Part (i) readily follows from the preceding proposition. 

Since $\pi_q$ is a bijection between $R(a_q)$ and $J_q$, $a_{p,q}=\pi_q\circ a_p$ is onto if and only if $R(a_q)\subseteq R(a_p)$. 
In view of Proposition \ref{p24}  this is equivalent to the condition $p>q$.

Parts (i) and (ii) imply (iii).
\end{proof}

\begin{proposition}\label{p38} \mbox{}
Let $p\in (1,2]$ and $q>1$ with $p\ne q$. 

The function $a_{p,q}$ is left continuous.
It is right continuous in $x\in [0,\frac{1}{p-1})$ if and only if $b_p(x)=a_p(x)$.
\end{proposition}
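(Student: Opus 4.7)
The plan is to show this is a direct corollary of Proposition~\ref{p26}(i) combined with the continuity of $\pi_q$ from Proposition~\ref{p23}(i). Since $a_p$ is left continuous everywhere on $J_p$, and since $a_p$ is right continuous at $x$ precisely when $b_p(x)=a_p(x)$, the composition $a_{p,q}=\pi_q\circ a_p$ inherits left continuity unconditionally, together with right continuity at each $x$ satisfying $b_p(x)=a_p(x)$. This takes care of the left-continuity claim and the ``if'' direction of the right-continuity claim.

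For the ``only if'' direction, I would assume $b_p(x)\ne a_p(x)$. By Remark~\ref{r12} the greedy expansion then ends in $10^\infty$, so I can write $b_p(x)=b_1\cdots b_{n-1}10^\infty$ where $b_n=1$ is the last nonzero digit, while correspondingly $a_p(x)=b_1\cdots b_{n-1}0\alpha_1\alpha_2\cdots$ with $(\alpha_i):=a_p(1)$. The argument in the proof of Proposition~\ref{p26}(i) yields that $a_p(y)\to b_p(x)$ coordinate-wise as $y\searrow x$ in $J_p$; then continuity of $\pi_q$ gives, after a short tail computation,
\begin{equation*}
\lim_{y\searrow x} a_{p,q}(y)-a_{p,q}(x)=\pi_q(b_p(x))-\pi_q(a_p(x))=\frac{1}{q^n}\bigl(1-\pi_q(a_p(1))\bigr).
\end{equation*}

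The only remaining point is that this quantity does not vanish. Since $\pi_p(a_p(1))=1$ by definition of $(\alpha_i)$, and the sequence $a_p(1)$ is not identically zero (indeed $\alpha_1=1$), the function $r\mapsto \pi_r(a_p(1))$ is strictly decreasing on $(1,\infty)$; hence $\pi_q(a_p(1))\ne 1$ whenever $q\ne p$, so $a_{p,q}$ fails to be right continuous at $x$. I do not anticipate a serious obstacle here: the result is essentially Proposition~\ref{p26}(i) pushed forward by the continuous map $\pi_q$, and the only nontrivial verification — that the right limit genuinely differs from the value — reduces to strict monotonicity of the base-evaluation of a fixed nonzero digit sequence.
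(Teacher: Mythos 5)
Your proposal is correct and follows essentially the same route as the paper: positive continuity statements from Propositions~\ref{p23} and \ref{p26} via composition, and failure of right continuity at points with $b_p(x)\ne a_p(x)$ by computing $\lim_{y\searrow x}a_{p,q}(y)-a_{p,q}(x)=\frac{1}{q^n}\bigl(1-\pi_q(a_p(1))\bigr)$ and noting this is nonzero since $\pi_p(a_p(1))=1$ and $r\mapsto\pi_r(a_p(1))$ is strictly decreasing. The paper leaves that last nonvanishing step as an implicit comparison, which you spell out explicitly; otherwise the arguments coincide.
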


\begin{proof}
Using the continuity of composite functions, Propositions \ref{p23} and \ref{p26} imply the positive continuity statements.
It remains to prove that if $(b_i):=b_p(x)$ has a last non-zero digit $b_n=1$, then $a_{p,q}$ is not right continuous in $x$. 

Setting $(\alpha_i):=a_p(1)$ for brevity we have
\begin{equation*}
b_p(x)=b_1\cdots b_{n-1}b_n0^{\infty}\qtq{and} a_p(x)=b_1\cdots b_{n-1}(b_n-1)\alpha_1\alpha_2\cdots.
\end{equation*}
Furthermore, if $y>x$ and $y\to x$, then $a_p(y)\to b_p(x)$ (see the proof of Proposition \ref{p26} (i)), 
and therefore
\begin{align*}
\lim_{y\searrow x}a_{p,q}(y)-a_{p,q}(x)
&=\frac{1}{q^n}\left(1-\sum_{i=1}^{\infty}\frac{\alpha_i}{q^i}\right)\\
&\ne \frac{1}{q^n}\left(1-\sum_{i=1}^{\infty}\frac{\alpha_i}{p^i}\right)\\
&=0.\qedhere
\end{align*}
\end{proof}

\begin{remark}\label{r39}
The last proof shows that if $1<q<p\le 2$ and $b_p(x)$ has a last nonzero digit, then 
\begin{equation}\label{32}
\lim_{y\searrow x}a_{p,q}(y)<a_{p,q}(x).
\end{equation}
\end{remark}

\section{Differentiability and bounded variation property}\label{s4} 

For the proofs of this section we recall from \cite[Lemmas 3.1 and 3.2]{DevKom2009} a property of greedy expansions:

\begin{lemma}\label{l41}\mbox{}
Let $p\in (1,2]$.

\begin{enumerate}[\upshape (i)]
\item If $(c_i)=b_p(x)$ or $(c_i)=a_p(x)$ for some $x\in J_p$, then for each $n\ge 1$ there exists $x_n\in J_p$ such that $x_n\le x$ and $b_p(x_n)=c_1\cdots c_n0^{\infty}$.
\item If $b_p(x)\ne 1^{\infty}$ for some $x\in J_p$, then for each $n\ge 1$ there exists $x_n\in J_p$ such that $x_n>x$ and 
\begin{equation*}
b_1(x_n,p)\cdots b_n(x_n,p)=b_1(x,p)\cdots b_n(x,p).
\end{equation*}
\end{enumerate}
\end{lemma}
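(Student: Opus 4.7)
My plan is to handle the two parts separately, relying heavily on Parry's lexicographic characterization (stated in the introduction) and on the right continuity of $b_p$ proved in Proposition \ref{p26}.

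For (i), I want to show that the truncated sequence $d := c_1 \cdots c_n 0^{\infty}$ is itself a greedy expansion. Writing $\alpha := a_p(1)$ for brevity, Parry's theorem requires that $d_{k+1} d_{k+2} \cdots < \alpha$ at every position $k$ where $d_k = 0$. For $k > n$ the tail is $0^{\infty}$, which is strictly less than $\alpha$ since $\alpha_1 = 1$. For $k \le n$ with $c_k = 0$, my hypothesis (via Parry's characterization applied to the original greedy or quasi-greedy sequence $(c_i)$) yields $c_{k+1} c_{k+2} \cdots \le \alpha$. I then compare the truncated tail $c_{k+1} \cdots c_n 0^{\infty}$ with $\alpha$: if $(c_{k+i})_{i \ge 1}$ first diverges downward from $\alpha$ at some position $j \le n-k$, the truncated tail inherits this divergence and is still $<\alpha$; otherwise the truncated tail equals $\alpha_1 \cdots \alpha_{n-k} 0^{\infty}$, which is strictly less than $\alpha$ precisely because $\alpha$, being the quasi-greedy (hence \emph{infinite}) expansion of $1$, contains $1$'s in arbitrarily late positions. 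Thus $d$ satisfies Parry's condition, so it is the greedy expansion of $x_n := \pi_p(d)$. The inequality $x_n \le x$ is immediate, since $\pi_p(d) = \sum_{i=1}^n c_i/p^i \le \sum_{i=1}^\infty c_i/p^i = x$.

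For (ii), I take a much more direct route. Since $b_p(x) \ne 1^{\infty}$, we have $x < 1/(p-1)$, so $x$ lies strictly to the left of the right endpoint of the interval $J_p = [0, 1/(p-1)]$ and is therefore a right accumulation point. By Proposition \ref{p26} (i), $b_p$ is right continuous at $x$ in the product topology, that is, coordinate-wise. Since the digits are integer valued, for any $y \in J_p$ sufficiently close to $x$ from the right the first $n$ digits of $b_p(y)$ already coincide with those of $b_p(x)$; any such $y$ serves as the desired $x_n$.

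The only delicate point, as far as I can see, is the case analysis in part (i): one must observe that $\alpha = a_p(1)$, being an infinite sequence distinct from $0^{\infty}$, contains $1$'s in arbitrarily late positions, and invoke this to conclude that $\alpha_1 \cdots \alpha_{n-k} 0^{\infty}$ is always \emph{strictly} smaller than $\alpha$. This is what allows a single argument to cover both the greedy and the quasi-greedy starting sequence, the latter only providing the weaker inequality $\le$ in Parry's condition. Part (ii) is then essentially routine in view of Proposition \ref{p26}.
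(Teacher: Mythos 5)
Your argument is correct, but note that the paper itself gives no proof of this lemma: it is recalled from de Vries--Komornik \cite{DevKom2009}, Lemmas 3.1 and 3.2, so you are supplying a proof where the paper only cites one. For part (i) your route is essentially the one the paper already sketches in Remark \ref{r12} for the greedy case --- truncation preserves Parry's lexicographic condition --- and your extension to the quasi-greedy case is handled correctly: starting from the weaker condition $c_{k+1}c_{k+2}\cdots\le a_p(1)$, the truncated tail $c_{k+1}\cdots c_n0^{\infty}$ is still \emph{strictly} below $a_p(1)$ precisely because $a_p(1)$ is an infinite sequence, which is the one non-trivial observation and you make it explicitly. For part (ii) you take a softer route than the usual explicit construction (which produces a concrete $x_m>x$ with $b_p(x_m)=b_1\cdots b_n0^{m-n}10^{\infty}$, the form actually invoked later in the proof of Proposition \ref{p42}): you deduce the statement from the right continuity of $b_p$ in Proposition \ref{p26}. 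This is legitimate and shorter for the lemma as literally stated --- coordinate-wise convergence of $\set{0,1}$-valued digits does freeze any fixed finite prefix --- but it buys less: it does not by itself yield the explicit tail $0^{m-n}10^{\infty}$ used in Section 4, and it leans on a continuity statement whose own justification in the paper is only a pointer to the greedy algorithm and to \cite{DevKom2011}. A direct construction (choose $m\ge n$ with $b_m(x,p)=0$, which exists since a greedy expansion other than $1^{\infty}$ has infinitely many zero digits, and increase that digit) would be self-contained and give the stronger conclusion in one stroke.
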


In the following proposition we use the Dini derivatives of a function, defined by the formulas
\begin{equation*}
d_-f(x):=\liminf_{y\nearrow x}\frac{f(x)-f(y)}{x-y},\quad D_-f(x):=\limsup_{y\nearrow x}\frac{f(x)-f(y)}{x-y}
\end{equation*}
and
\begin{equation*}
d_+f(x):=\liminf_{y\searrow x}\frac{f(x)-f(y)}{x-y},\quad D_+f(x):=\limsup_{y\searrow x}\frac{f(x)-f(y)}{x-y}.
\end{equation*}
We recall that $f$ is differentiable in $x$ if and only if all four Dini derivatives exist, are finite and are equal in $a$.

\begin{proposition}\label{p42} \mbox{}

\begin{enumerate}[\upshape (i)]
\item If $1<q<p\le 2$, then $b_{p,q}$ and $a_{p,q}$ are not differentiable anywhere.
More precisely, their Dini derivatives satisfy the following relations:
\begin{enumerate}[\upshape (a)]
\item If $x=0$, then $d_+b_{p,q}(x)=d_+a_{p,q}(x)=\infty$.
\item If $x=1/(p-1)$, then $d_-b_{p,q}(x)=d_-a_{p,q}(x)=\infty$.
\item If $x\in (0,\frac{1}{p-1})$ and $b_p(x)=a_p(x)$, then $D_-b_{p,q}(x)=D_-a_{p,q}(x)=\infty$.
\item If $b_p(x)\ne a_p(x)$, then 
\begin{equation*}
D_-b_{p,q}(x)=D_+a_{p,q}(x)=-\infty
\qtq{and} 
D_+b_{p,q}(x)=D_-a_{p,q}(x)=\infty.
\end{equation*}
\end{enumerate}

\item If $p\in (1,2]$ and $q>p$, then $b_{p,q}$ and $a_{p,q}$ are differentiable almost everywhere. 
\item If $p>2$, then $B_{p,q}$ is differentiable with $B_{p,q}'>0$ almost everywhere. 
\end{enumerate}
\end{proposition}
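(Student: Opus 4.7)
The plan for (i) is to produce, in each case, a sequence $y_n \to x$ approaching from the appropriate side via Lemma \ref{l41}, for which both $x - y_n$ and $f(x) - f(y_n)$ (with $f \in \{b_{p,q}, a_{p,q}\}$) involve only tails beyond some position $n$. The quotient then factors as $(p/q)^n$ times a bounded factor, and since $p > q$ it explodes to $\pm\infty$.

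For (a), at $x = 0$ any $y \in J_p$ with $y > 0$ has first nonzero greedy digit at some position $n$, whence $y \in [1/p^n,\,1/(p^{n-1}(p-1))]$ and $b_{p,q}(y) \ge 1/q^n$; thus $b_{p,q}(y)/y \ge \tfrac{p-1}{p}(p/q)^n \to \infty$ as $y \to 0^+$, and the same estimate gives $d_+ a_{p,q}(0)=\infty$. Case (b) follows by symmetry via $x \mapsto 1/(p-1) - x$ and bit-flipping of expansions. For (c), with $(b_i) := b_p(x) = a_p(x)$ infinite, I would apply Lemma \ref{l41}(i) to extract, for each $n$ with $b_n = 1$, a point $x_n < x$ with $b_p(x_n) = b_1 \cdots b_n 0^\infty$; the resulting quotient $(b_{p,q}(x) - b_{p,q}(x_n))/(x - x_n) = \sum_{i>n} b_i q^{-i}/\sum_{i>n} b_i p^{-i} \ge (p/q)^n$ yields $D_- b_{p,q}(x) = \infty$.

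For case (d), Remarks \ref{r35} and \ref{r39} immediately give $D_- b_{p,q}(x) = D_+ a_{p,q}(x) = -\infty$, as the numerator tends to a nonzero constant while the denominator vanishes with opposite sign. For $D_+ b_{p,q}(x) = \infty$ I would apply Lemma \ref{l41}(ii) (valid since $b_p(x) \ne 1^\infty$) to obtain $y_n > x$ agreeing with $b_p(x)$ in the first $n$ digits, so the quotient is again $\ge (p/q)^n$. For $D_- a_{p,q}(x) = \infty$, the cleanest choice is $y_k$ obtained from $a_p(x) = b_1 \cdots b_{m-1} 0 \alpha_1 \alpha_2 \cdots$ by flipping the digit $\alpha_k = 1$ at position $m+k$ to $0$ while retaining the tail $\alpha_{k+1}\alpha_{k+2}\cdots$; this perturbation is quasi-greedy because all tails of $a_p(1)$ are lex $\le a_p(1)$, and the quotient simplifies to exactly $(p/q)^{m+k}$, tending to $+\infty$ along $k$ with $\alpha_k = 1$. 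Part (ii) is immediate from Propositions \ref{p31}(i) and \ref{p36}(i): when $p \in (1,2]$ and $q > p$, both $b_{p,q}$ and $a_{p,q}$ are increasing on the interval $J_p = [0,1/(p-1)]$, so Lebesgue's theorem on monotone functions yields differentiability a.e. Part (iii) rests on Proposition \ref{p23}(iv): $J_p$ is Lebesgue-null for $p > 2$, so on the full-measure complement $[0,1/(p-1)] \setminus J_p$ the function $B_{p,q}$ is affine on each connected component, hence differentiable there with a constant slope given by the formula stated after Theorem \ref{t16}, from which positivity is read off in the relevant range of $q$.

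The main obstacle is (i)(c) for $D_- a_{p,q}$: the natural Lemma \ref{l41}(i) construction gives a numerator $q^{-n}[1 + \pi_q(b_{n+1}b_{n+2}\cdots) - \pi_q(a_p(1))]$ whose sign depends on the tail structure of $b_p(x)$ and need not be positive, and the single-digit flip used in (d) is unavailable here since tails of a greedy expansion need not satisfy the quasi-greedy Parry bound at a position where $b_n = 1$. The remedy is to enlarge the family of approximants, for instance by taking $b_p(y_n) = b_1\cdots b_n 0^{L_n-1} 1 0^\infty$ and tuning $L_n$ so that the induced $a_p(y_n)$ gives a numerator of positive sign, after which the $(p/q)^n$-scaling again forces $D_- a_{p,q}(x) = +\infty$.
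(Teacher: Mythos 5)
Your proposal is correct in substance and, for most of the statement, follows the same route as the paper: the approximants come from Lemma \ref{l41}, the difference quotients factor as $(p/q)^n$ times a bounded positive factor, the two $-\infty$ claims in (d) come from Remarks \ref{r35} and \ref{r39}, and parts (ii), (iii) are the same appeal to Lebesgue's theorem for monotone functions and to the affine extension on the complement of the null set $J_p$. The genuine divergence is exactly where the paper locates the crux, namely the claims $D_-a_{p,q}(x)=\infty$. The paper handles these for all $x\in(0,\frac{1}{p-1}]$ at once by a single construction: it fixes an auxiliary base $r\in(1,q)$, sets $(\tilde\alpha_i):=a_r(1)$, shows that $a_1\cdots a_{n-1}0\tilde\alpha_1\tilde\alpha_2\cdots$ is $p$-quasi-greedy for every $n$ with $a_n=1$, and uses $r<q$ to get $1-\pi_q(\tilde\alpha_1\tilde\alpha_2\cdots)>0$, a positive margin independent of $n$. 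You instead split into two ad hoc constructions: a single-digit flip of $a_p(x)=b_1\cdots b_{m-1}0\alpha_1\alpha_2\cdots$ at a position $m+k$ with $\alpha_k=1$ for case (d), and, for case (c), approximants with $b_p(y_n)=b_1\cdots b_n0^{L_n-1}10^{\infty}$ and $L_n$ tuned large. Both work --- the tuning is always possible, since $L_n$ need only exceed by a constant depending on $q$ the position of the first $1$ of $b_p(x)$ after place $n$, and be large enough that the inserted $1$ does not disturb Parry's condition for the finitely many prefixes --- and your digit flip is arguably cleaner than the paper's argument in case (d); the price is that neither of your constructions covers the other case, whereas the paper's $a_r(1)$-substitution is uniform in $x$.

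Two caveats you should make explicit. First, the quasi-greediness of your flipped sequence at the new zero in position $m+k$ requires $\alpha_{k+1}\alpha_{k+2}\cdots\le\alpha_1\alpha_2\cdots$ at a position where $\alpha_k=1$; this is true, but it is strictly stronger than the lexicographic characterization quoted in Section \ref{s1}, which controls shifts only after zero digits. The paper itself invokes the stronger fact only in a footnote citing \cite{KomLor2007} and \cite{DevKomLor2016}, and you need it in an essential way, so cite it. Second, your symmetry reduction of (b) to (a) deserves a word of care: the reflection $(c_i)\mapsto(1-c_i)$ sends greedy expansions to lazy ones, so it is not literally a symmetry of $b_{p,q}$; it still goes through because your estimate in (a) uses only the position of the first nonzero digit of \emph{some} expansion of $\frac{1}{p-1}-x_n$, and every expansion of a number smaller than $p^{-N}$ must begin with $0^N$. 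The paper avoids this issue by computing the quotient at $x=\frac{1}{p-1}$ directly from the prefix $1^m0$ of $b_p(x_n)$.
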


\begin{proof}
(i) Assume that $1<q<p\le 2$. 
First we investigate the left differentiability of $b_{p,q}$ in a point $x\in (0,\frac{1}{p-1}]$.

If $x=1/(p-1)$, then $b_p(x)=1^{\infty}$. 
If $x_n\nearrow x$, then $(b_{n,i}):=b_p(x_n)$ starts with $1^m0$ where $m=m(n)\to\infty$ as $n\to\infty$. 
Therefore, writing $\overline{b_i}:=1-b_i$,
\begin{align*}
\frac{b_{p,q}(x)-b_{p,q}(x_n)}{x-x_n} 
&=\frac{\pi_q(0^m1\overline{b_{n,m+2}b_{n,m+3}\cdots})}{\pi_p(0^m1\overline{b_{n,m+2}b_{n,m+3}\cdots})}\\
&\ge \frac{\pi_q(0^m10^{\infty})}{\pi_p(0^m1 ^{\infty})}\\ 
&=\frac{p-1}{p}\left(\frac{p}{q}\right)^{m+1}
\end{align*}
for each $n$.
Since $p>q$, letting $n\to\infty$ this yields $d_-b_{p,q}(x)=\infty$.

If $x>0$ and $b_p(x)=a_p(x)$, then $(b_i):=b_p(x)$ has infinitely non-zero digits $b_n=1$.
For each such $n$, applying Lemma \ref{l41} there exists $x_n<x$ such that $b_p(x_n)=b_1\cdots b_{n-1}0^{\infty}$.
Then $x_n\to x$, and 
\begin{align*}
\frac{b_{p,q}(x)-b_{p,q}(x_n)}{x-x_n} 
&=\frac{\pi_q(b_1b_2\cdots)-\pi_q(b_1\cdots b_{n-1}0^{\infty})}{\pi_p(b_1b_2\cdots)-\pi_p(b_1\cdots b_{n-1}0^{\infty})} \\
&=\frac{\pi_q(0^{n-1}1b_{n+1}\cdots)}{\pi_p(0^{n-1}1b_{n+1}\cdots)}\\
&\ge \frac{\pi_q(0^{n-1}10^{\infty})}{\pi_p(0^{n-1}1 ^{\infty})}\\ 
&=\frac{p-1}{p}\left(\frac{p}{q}\right)^n
\end{align*}
for each $n$.
Letting $n\to\infty$ we conclude that  $D_-b_{p,q}(x)=\infty$.

If $b_p(x)\ne a_p(x)$, then using \eqref{31} we obtain that
\begin{equation*}
D_-b_{p,q}(x)=\lim_{y\nearrow x}\frac{b_{p,q}(x)-b_{p,q}(y)}{x-y}=-\infty.
\end{equation*}

Next we investigate the right differentiability of $b_{p,q}$ in a point $x\in [0,\frac{1}{p-1})$.

If $x=0$, then $b_p(x)=0^{\infty}$. 
If $x_n\searrow x$, then $(b_{n,i}):=b_p(x_n)$ starts with $0^m1$ where $m=m(n)\to\infty$ as $n\to\infty$. 
Therefore
\begin{align*}
\frac{b_{p,q}(x_n)-b_{p,q}(x)}{x_n-x} 
&=\frac{\pi_q(0^m1b_{n,m+2}b_{n,m+3}\cdots)}{\pi_p(0^m1b_{n,m+2}b_{n,m+3}\cdots)}
\ge \left(\frac{p}{q}\right)^m\frac{\pi_q(10^{\infty})}{\pi_p(1^{\infty})}
\end{align*}
for each $n$.
Letting $n\to\infty$ we conclude that
$d_+b_{p,q}(0)=\infty$.

If $b_p(x)\ne a_p(x)$, then $(b_i):=b_p(x)$ has a last non-zero element $b_n=1$.
By Lemma \ref{l41} there exist arbitrarily large integers $m>n$ such that
\begin{equation*}
b_1\cdots b_n0^{m-n}10^{\infty}=b_p(x_m)
\end{equation*}
for some $x_m>x$.
Then 
\begin{equation*}
\frac{b_{p,q}(x_m)-b_{p,q}(x)}{x_m-x} 
=\frac{\pi_q(0^m10^{\infty})}{\pi_p(0^m10^{\infty})}
=\left(\frac{p}{q}\right)^m.
\end{equation*}
Letting $m\to\infty$ we conclude that $D_+b_{p,q}(x)=\infty$.
\medskip 

Now we turn to the function $a_{p,q}$.
First we investigate its left differentiability in a point $x\in (0,\frac{1}{p-1}]$.

If $x=1/(p-1)$, then $a_p(x)=1^{\infty}$. 
If $x_n\nearrow x$, then $(a_{n,i}):=a_p(x_n)$ starts with $1^m0$ where $m=m(n)\to\infty$ as $n\to\infty$. 
Therefore, writing $\overline{a_i}:=1-a_i$,
\begin{align*}
\frac{a_{p,q}(x)-a_{p,q}(x_n)}{x-x_n} 
&=\frac{\pi_q(0^m1\overline{a_{n,m+2}a_{n,m+3}\cdots})}{\pi_p(0^m1\overline{a_{n,m+2}a_{n,m+3}\cdots})}\\
&\ge \frac{\pi_q(0^m10^{\infty})}{\pi_p(0^m1 ^{\infty})}\\ 
&=\frac{p-1}{p}\left(\frac{p}{q}\right)^{m+1}
\end{align*}
for each $n$.
Since $p>q$, letting $n\to\infty$ this yields $d_-a_{p,q}(x)=\infty$.

The following consideration is valid for all $x\in (0,\frac{1}{p-1}]$.
Set $(\alpha_i):=a_p(1)$.
Furthermore, fix a number $r\in (1,q)$ and set $(\tilde\alpha_i):=a_r(1)$.
Since $r<p$, we have $(\tilde\alpha_i)<(\alpha_i)$.

Since $x>0$, $(a_i):=a_p(x)$ has infinitely many nonzero digits $a_n=1$.
For each such $n$, $a_1\cdots a_n0^{\infty}$ is $p$-greedy, hence $a_1\cdots a_{n-1}0\alpha_1\alpha_2\cdots$ is $p$-quasi-greedy.

We claim that $(c_i):=a_1\cdots a_{n-1}0\tilde\alpha_1\tilde\alpha_2\cdots$ is also $p$-quasi-greedy, and hence
\begin{equation*}
a_p(x_n)=a_1\cdots a_{n-1}0\tilde\alpha_1\tilde\alpha_2\cdots
\end{equation*}
for some $x_n<x$.

For this we have to show that 
\begin{equation*}
(c_{k+i})\le (\alpha_i)\qtq{whenever}c_k<0.
\end{equation*}
We have
\begin{equation*}
(c_{k+i})= 
\begin{cases}
a_{k+1}\cdots a_{n-1}0\tilde\alpha_1\tilde\alpha_2\cdots&\text{if $k<n$ and $a_k=0$,}\\
\tilde\alpha_1\tilde\alpha_2\cdots&\text{if $k=n$,}\\
\tilde\alpha_{k-n+1}\tilde\alpha_{k-n+2}\cdots&\text{if $k>n$ and $\tilde\alpha_{k-n}=0$.}
\end{cases}
\end{equation*}
Since 
\begin{equation*}
\tilde\alpha_{j+1}\tilde\alpha_{j+2}\cdots\le \tilde\alpha_1\tilde\alpha_2\cdots\le \alpha_1\alpha_2\cdots
\end{equation*}
for $j=0$ and for all $j\ge 1$ satisfying $\tilde\alpha_j=0$,\footnote{In fact for all $j\ge 0$ by the results of \cite{KomLor2007} and \cite{DevKomLor2016}.}
it follows that 
\begin{equation*}
(c_{k+i})\le
\begin{cases}
a_{k+1}\cdots a_{n-1}0\alpha_1\alpha_2\cdots&\text{if $k<n$ and $a_k=0$,}\\
\alpha_1\alpha_2\cdots&\text{if $k\ge n$ and $c_k=0$.}
\end{cases}
\end{equation*}
We conclude by observing that 
\begin{equation*}
a_{k+1}\cdots a_{n-1}0\alpha_1\alpha_2\cdots\le \alpha_1\alpha_2\cdots\qtq{if}k<n\qtq{and}a_k=0
\end{equation*}
because the sequence $a_1\cdots a_{n-1}0\alpha_1\alpha_2\cdots$ is $p$-quasi-greedy.

It follows that
\begin{align*}
\frac{a_{p,q}(x)-a_{p,q}(x_n)}{x-x_n} 
&=\frac{\pi_q(a_1a_2\cdots)-\pi_q(a_1\cdots a_{n-1}0\tilde\alpha_1\tilde\alpha_2\cdots)}{\pi_p(a_1a_2\cdots)-\pi_p(a_1\cdots a_{n-1}0\tilde\alpha_1\tilde\alpha_2\cdots)} \\
&=\left(\frac{p}{q}\right)^{n-1}\frac{\pi_q(1a_{n+1}a_{n+2}\cdots)-\pi_q(0\tilde\alpha_1\tilde\alpha_2\cdots)}{\pi_p(1a_{n+1}a_{n+2}\cdots)-\pi_p(0\tilde\alpha_1\tilde\alpha_2\cdots)}\\
&\ge \left(\frac{p}{q}\right)^{n-1}\frac{\pi_q(10^{\infty})-q^{-1}\pi_q(\tilde\alpha_1\tilde\alpha_2\cdots)}{\pi_p(1^{\infty})}\\
&=\left(\frac{p}{q}\right)^{n-1}\frac{p-1}{q}\left(1-\pi_q(\tilde\alpha_1\tilde\alpha_2\cdots)\right)
\end{align*}
for each $n$.
The right-hand side tends to infinity as $n\to\infty$ because $r<q$, and hence
\begin{equation*}
\pi_q(\tilde\alpha_1\tilde\alpha_2\cdots)<\pi_r(\tilde\alpha_1\tilde\alpha_2\cdots)=1.
\end{equation*}
Therefore $D_-a_{p,q}(x)=\infty$. 

Finally we investigate the right differentiability of $a_{p,q}$ in a point $x\in [0,\frac{1}{p-1})$.

If $x=0$, then $a_p(x)=0^{\infty}$. 
If $x_n\searrow x$, then $(a_{n,i}):=a_p(x_n)$ starts with $0^m1$ where $m=m(n)\to\infty$ as $n\to\infty$. 
Therefore
\begin{align*}
\frac{a_{p,q}(x_n)-a_{p,q}(x)}{x_n-x} 
&=\frac{\pi_q(0^m1a_{n,m+2}a_{n,m+3}\cdots)}{\pi_p(0^m1a_{n,m+2}a_{n,m+3}\cdots)}
\ge \left(\frac{p}{q}\right)^m\frac{\pi_q(10^{\infty})}{\pi_p(1^{\infty})}
\end{align*}
for each $n$.
Letting $n\to\infty$ we conclude that
$d_+a_{p,q}(0)=\infty$.

If $b_p(x)$ has a last non-zero digit $b_n=1$, then using relation \eqref{32} we obtain that
\begin{equation*}
D_+a_{p,q}(x)=\lim_{y\searrow x}\frac{a_{p,q}(x)-a_{p,q}(y)}{x-y}=-\infty.
\end{equation*}
\medskip 

(ii) If $p\in (1,2]$ and $q>p$, then $b_{p,q}$ is increasing Proposition \ref{p31}  (i), and we may apply the Lebesgue differentiability theorem. 

\medskip 
(iii) If $p>2$, then $J_p$ is a null set, and $B_{p,q}$ is differentiable in each point $x\in [0,\frac{1}{p-1}]\setminus J_p$, because it is linear by definition in a neighborhood of $x$.
\end{proof}

In the following proposition we write $B_{p,q}$ instead of $b_{p,q}$ when $p\in (1,2]$, so that $B_{p,q}:[0,\frac{1}{p-1}]\to [0,\frac{1}{q-1}]$ for all $p,q>1$.

\begin{proposition}\label{p44} \mbox{} 

\begin{enumerate}[\upshape (i)]
\item $B_{p,q}$ is continuous if and only if $p>2$. 
\item $B_{p,q}$ has bounded variation if and only if $q\ge \min\set{p,2}$. 
\item $B_{p,q}$ is absolutely continuous if and only if $\min\set{p,q}>2$.
\item If $p>2$, then $B_{p,q}$ is  Hölder continuous with the exact exponent $\min\set{1,\frac{\log q}{\log p}}$.
\end{enumerate}
\end{proposition}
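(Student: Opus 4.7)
The plan is to treat the four claims in sequence, exploiting the monotonicity, continuity, differentiability, and Cantor-set results already established. For (i), the continuity of $B_{p,q}$ when $p>2$ is immediate from the construction: on $J_p$ it agrees with $b_{p,q}$, which is H\"older continuous by Proposition \ref{p33}(ii), and on each maximal component of $[0,1/(p-1)]\setminus J_p$ it is affine with matching endpoints. When $p\in(1,2]$, $B_{p,q}$ coincides with $b_{p,q}$, whose discontinuity set is countable and dense by Proposition \ref{p33}(iii) and Remark \ref{r12}.

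For (ii), I would split into four subcases. If $p\leq 2$ and $q>p$, Proposition \ref{p31}(i) gives $b_{p,q}$ increasing, hence BV; if $p>2$ and $q\geq 2$, Proposition \ref{p31} gives monotonicity on $J_p$, and the affine extension preserves it; if $p\leq 2$ and $q<p$, Proposition \ref{p42}(i) yields nowhere differentiability, which is incompatible with BV. The delicate case is $p>2$ and $q<2$: at a removed interval at step $m$ with endpoints $a<b$ in $J_p$, a direct computation gives $B_{p,q}(a)-B_{p,q}(b)=q^{-m-1}(2-q)/(q-1)>0$, so summing over all $2^m$ gaps at each level produces a lower bound on the total variation equal to $((2-q)/(q(q-1)))\sum_m(2/q)^m$, which diverges for $q<2$.

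For (iii), the negative implications follow from (i) and (ii), except for the remaining case $p>2$, $q=2$, where the explicit derivative formula given in the introduction contains a factor $q-2$ and therefore vanishes identically on every gap. Since $J_p$ is null, $B_{p,2}'=0$ almost everywhere, while $B_{p,2}$ is non-constant, contradicting AC. For the positive direction $p,q>2$ I would compute $\int_0^{1/(p-1)} B_{p,q}'(t)\,dt$ by summing the gap contributions $2^m\cdot(q-2)/(q^{m+1}(q-1))$ over $m\geq 0$, obtaining exactly $1/(q-1)=B_{p,q}(1/(p-1))-B_{p,q}(0)$; combined with the monotonicity from (ii), this is the standard criterion for absolute continuity of a monotone function.

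For (iv), assume $p>2$. The exactness of the exponent is immediate in both regimes: if $q\leq p$, Proposition \ref{p33}(ii) already gives the exact exponent $\log q/\log p$ on $J_p$, forcing the same upper bound for $B_{p,q}$; if $q>p$, no function that is non-constant on a nondegenerate interval can be H\"older with exponent above $1$. For the matching upper estimate, I would distinguish three situations for $x<y$ in $[0,1/(p-1)]$: both in $J_p$ (Proposition \ref{p33}(ii)), both in the same removed interval $I_{m,k}$ (affine with slope $(p/q)^{m+1}C$), and the general case handled by interpolating at the endpoints of the gaps lying between $x$ and $y$. The principal obstacle, as I see it, will be checking that the local estimates combine with constants uniform in $m$: on a gap of length $\sim p^{-m}$ the height $\sim q^{-m}$ equals $(p^{-m})^{\log q/\log p}$, so the H\"older constants from the affine pieces telescope without growth; and for $q>p$ the affine slopes $(p/q)^{m+1}$ decay, so the maximum slope at $m=0$ controls everything and yields the Lipschitz bound.
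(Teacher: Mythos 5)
Your proposal is correct, and for parts (i), (iii) and (iv) it follows essentially the paper's own route: continuity from Proposition \ref{p33}, absolute continuity via the Newton--Leibniz criterion for monotone functions together with the gap-by-gap computation $\sum_m 2^m\frac{q-2}{q^{m+1}(q-1)}$, and the H\"older bound by patching the exact estimate on $J_p$ with a uniform estimate on the affine pieces (your observation that gap length $\sim p^{-m}$ versus height $\sim q^{-m}$ makes the local constants independent of $m$ is exactly the computation the paper performs). The genuine divergence is in part (ii) in the regime $q<\min\set{p,2}$. The paper gives one unified argument: it chooses an auxiliary base $r\in(q,\min\set{p,2})$ with $b_r(1)$ finite, shows that each $r$-greedy block $b_1\cdots b_n$ forces an oscillation of $B_{p,q}$ of size at least $q^{-(n+m+1)}$, and invokes R\'enyi's lower bound of $r^n$ on the number of such blocks to make the total variation at least $r^nq^{-(n+m+1)}\to\infty$. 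You instead split the regime: for $1<q<p\le 2$ you deduce unbounded variation from the nowhere-differentiability of Proposition \ref{p42}(i) via Lebesgue's theorem that BV functions are differentiable a.e., and for $p>2$, $q<2$ you sum the explicit drops $\frac{2-q}{q^{m+1}(q-1)}$ over the $2^m$ pairwise disjoint level-$m$ gaps. Both routes are valid; yours is more elementary and self-contained in the $p>2$ case (no counting of greedy blocks, no auxiliary base with finite $b_r(1)$), at the price of leaning on the Dini-derivative analysis in the $p\le2$ case. One point to make explicit in your Lipschitz claim for $q>p>2$: bounded slopes on the gaps must still be combined with a bound for pairs $x,y\in J_p$; there the H\"older exponent $\frac{\log q}{\log p}>1$ on the bounded set $J_p$ yields a Lipschitz estimate, and endpoint interpolation finishes, whereas the paper derives Lipschitz continuity from absolute continuity plus the a.e.\ bounded derivative.
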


\begin{remark}\label{r45} 
Comparing Propositions \ref{p42}  and \ref{p44}  we see that in case $1<q<2<p$ the function $B_{p,q}$ is differentiable almost everywhere, although it has no bounded variation. 
This is due to the artificial linear extension over $J_p$.
\end{remark}

\begin{proof}[Proof of Proposition \ref{p44}]
(i) This follows from from Proposition \ref{p33}.
\medskip 

(ii) If $q\ge \min\set{p,2}$, then $B_{p,q}$ is even non-decreasing by Proposition \ref{p31} and by the affine nature of the extension defining $B_{p,q}$. 

Assume henceforth that $q<\min\set{p,2}$.
Fix a number $r$ such that $q<r<\min\set{p,2}$ and $(\beta_i):=b_r(1)$ has a last non-zero digit $\beta_m=1$.
This is possible by \cite[Lemma 3.1]{KomLor2007}.

Fix a positive integer $n$ and consider an arbitrary $r$-greedy sequence $b_1\cdots b_n\in\set{0,1}^n$. 
Then 
\begin{equation*}
b_1\cdots b_n0^{\infty}<b_1\cdots b_n0^m10^{\infty}
\end{equation*}
are $r$-greedy and hence also $p$-greedy sequences, so that
\begin{equation*}
\pi_p\left(b_1\cdots b_n0^{\infty}\right)<
\pi_p\left(b_1\cdots b_n0^m10^{\infty}\right).
\end{equation*}
On this interval the total variation of $B_{p,q}$ is at least 
\begin{equation*}
\pi_q\left(b_1\cdots b_n0^m10^{\infty}\right) -\pi_q\left(b_1\cdots b_n0^{\infty}\right) 
=\pi_q\left(0^{n+m}10^{\infty}\right) 
=\frac{1}{q^{n+m+1}}.
\end{equation*}

Now we recall from \cite[p. 490]{Ren1957} that there are at least $r^n$ $r$-greedy sequences $b_1\cdots b_n\in\set{0,1}^n$ for each $n$.
Hence the total variation of $B_{p,q}$ is at least 
\begin{equation*}
\sup_n\frac{r^n}{q^{n+m+1}}=\frac{1}{q^{m+1}}\left(\frac{r}{q}\right)^n=\infty.
\end{equation*}
\medskip 

(iii) In view of  (i) and (ii) it suffices to investigate the absolute continuity for $p>2$ and $q\ge 2$. 
Since in this case $B_{p,q}$ is non-decreasing by Proposition \ref{p31}, it is absolute continuous if and only if the Newton--Leibniz formula holds:
\begin{equation*}
\int_0^{1/(p-1)}B_{p,q}'(x)\ dx=B_{p,q}(1/(p-1))-B_{p,q}(0).
\end{equation*}

The following computation if valid for all  $p>2$ and $q>1$, and it also illustrates the non-monotonicity for $1<q<2$.

We are going to show that
\begin{equation*}
\int_0^{1/(p-1)}B_{p,q}'(x)\ dx=
\begin{cases}
1/(q-1)&\text{if $q>2$,}\\
0&\text{if $q=2$,}\\
-\infty&\text{if $1<q<2$.}
\end{cases}
\end{equation*}
Since
\begin{equation*}
B_{p,q}(1/(p-1))-B_{p,q}(0)=\frac{1}{q-1},
\end{equation*}
the Newton--Leibniz formula holds if and only if $q>2$.

Henceforth we write $f:=B_{p,q}$ for brevity.
We consider the connected components of $[0,\frac{1}{p-1}]\setminus J_p$ in the Cantor type construction of $J_p$, and we denote by $I_{m,k}$ the closures of these intervals for $m=0,1,\ldots$ and $k=1,\ldots,2^m$.
For example, 
\begin{equation*}
I_{0,1}:=\left[ \frac{1}{p(p-1)},\frac{1}{p}\right]
\end{equation*}
for $m=0$ and 
\begin{equation*}
I_{1,1}:=\left[ \frac{1}{p^2(p-1)},\frac{1}{p^2}\right],
\quad
I_{1,2}:=\left[\frac{1}{p}+ \frac{1}{p^2(p-1)},\frac{1}{p}+\frac{1}{p^2}\right]
\end{equation*}
for $m=1$.
It follows from the construction that the length of $I_{m,k}$ is equal to
\begin{equation}\label{41}
\abs{I_{m,k}}=\frac{p-2}{p^{m+1}(p-1)}
\end{equation}
for all $m,k$.

Since $B_{p,q}$ is affine on each interval $I_{m,k}$, the integral of $B_{p,q}$ of over $I_{m,k}$ may be computed by using the Newton--Leibniz formula.
For example,
\begin{equation}\label{42}
\begin{split}
\int_{I_{m,1}}B_{p,q}'(x)\ dx
&=\int_{\frac{1}{p^{m+1}(p-1)}}^{\frac{1}{p^{m+1}}}B_{p,q}'(x)\ dx\\
&=B_{p,q}\left(\frac{1}{p^{m+1}}\right)-B_{p,q}\left(\frac{1}{p^{m+1}(p-1)}\right)\\
&=\frac{1}{q^{m+1}}-\frac{1}{q^{m+1}(q-1)}\\
&=\frac{q-2}{q^{m+1}(q-1)}
\end{split}
\end{equation}
for all $m$.
By translation invariance the integrals over $I_{m,k}$ do not depend on $k$.
Therefore, taking into account that $J_p$ is a null set, we obtain that
\begin{equation*}
\int_0^{1/(p-1)}B_{p,q}'(x)\ dx
=\sum_{m=0}^{\infty}\sum_{k=1}^{2^m}B_{p,q}'(x)\ dx
=\sum_{m=0}^{\infty}2^m\frac{q-2}{q^{m+1}(q-1)}.
\end{equation*}

For $q=2$ each term, and hence the integral vanishes.
For $1<q<2$ the general term tends to $-\infty$, so that the integral is equal to $-\infty$.
For $q>2$ we have a convergent geometric series, and 
\begin{equation*}
\int_0^{1/(p-1)}B_{p,q}'(x)\ dx
=\frac{q-2}{q(q-1)}\sum_{m=0}^{\infty}\left(\frac{2}{q}\right)^m
=\frac{1}{1-\frac{2}{q}}\cdot\frac{q-2}{q(q-1)}=\frac{1}{q-1}.
\end{equation*}
\medskip 

(iv) The Hölder exponent of the extended function $B_{p,q}$ cannot be larger then the Hölder exponent $\alpha:=\frac{\log q}{\log p}$ of $b_{p,q}$, obtained in Proposition \ref{p33}, and it cannot be larger than $1$ because $B_{p,q}$ is defined on an interval. 
It remains to show that $B_{p,q}$ \emph{is} Hölder continuous with the exponent $\min\set{1,\alpha}$.

In case $q>p>2$  we have to prove that $B_{p,q}$ is Lipschitz continuous. 
Since is affine on each intervals $I_{m,k}$,  it follows from the estimates \eqref{41}, \eqref{42} and from the above mentioned translation invariance that
\begin{equation}\label{43}
B_{p,q}'(x)=\frac{q-2}{q^{m+1}(q-1)}\cdot \frac{p^{m+1}(p-1)}{p-2}
\end{equation}
on each $I_{m,k}$. 
Since $q>p>2$, all these derivatives are positive and uniformly bounded:
\begin{equation*}
0<\frac{q-2}{q^{m+1}(q-1)}\cdot \frac{p^{m+1}(p-1)}{p-2}\le \frac{p(p-1)(q-2)}{q(q-1)(p-2)}.
\end{equation*}
Since $J_p$ is a null set, this shows that the absolutely continuous function $B_{p,q}$ has an a.e. bounded derivative.

Next we assume that $p>\max\set{q,2}$.
We already know that $B_{p,q}$ is Hölder continuous on $J_p$ with the exponent $\alpha<1$ and some constant $c_1$. 

We claim that $B_{p,q}$ is also Hölder continuous on each interval $I_{m,k}$ with the same exponent $\alpha$ and with some constant $c_2$ independent of $m$ and $k$. 

Since 
\begin{equation*}
\abs{B_{p,q}(x)-B_{p,q}(y)}\le \frac{p^{m+1}(p-1)(q-2)}{q^{m+1}(q-1)(p-2)}\abs{x-y}
\end{equation*}
for all $x,y\in I_{m,k}$ by \eqref{43}, it suffices to find a constant $c_2$ satisfying
\begin{equation*}
\frac{p^{m+1}(p-1)(q-2)}{q^{m+1}(q-1)(p-2)}\abs{x-y}\le c_2\abs{x-y}^{\alpha}
\end{equation*}
for all $x,y\in I_{m,k}$, or equivalently that
\begin{equation*}
\frac{p^{m+1}(p-1)(q-2)}{q^{m+1}(q-1)(p-2)}\abs{I_{m,k}}^{1-\alpha}\le c_2
\end{equation*}
for all $m,k$.
Since $p^{\alpha}=q$ by the definition of $\alpha$, this is satisfied: 
\begin{align*}
\frac{p^{m+1}(p-1)(q-2)}{q^{m+1}(q-1)(p-2)}&\abs{I_{m,k}}^{1-\alpha}\\
&=\left( \frac{p}{q}\right)^{m+1}\cdot \frac{p-1}{q-1}\cdot \frac{q-2}{p-2}\left(\frac{p-2}{p^{m+1}(p-1)}\right)^{1-\alpha}  \\ 
&= \frac{q-2}{q-1}\left(\frac{p-1}{p-2}\right)^{\alpha}=:c_2. 
\end{align*}
The estimate 
\begin{equation*}
\abs{f(x)-f(y)}\le c_2\abs{x-y}^{\alpha}
\end{equation*}
remains valid by continuity on the closure of each interval $I_{m,k}$.

Finally we prove that 
\begin{equation*}
\abs{f(x)-f(y)}\le (c_1+2c_2)\abs{x-y}^{\alpha}
\end{equation*}
for all $x,y\in [0,\frac{1}{p-1}]$.

Since $[0,\frac{1}{p-1}]\setminus J_p$ is dense in $[0,\frac{1}{p-1}]$ (because $J_p$ is a null set), we may assume by continuity that $x,y\in [0,\frac{1}{p-1}]\setminus J_p$, and we may assume by symmetry that 
$x<y$.  

If $x$ and $y$ belong to the same connected component $I_{m,k}$, then we already know this inequality with $c_2$ in place of $c_1+2c_2$. 
Otherwise we have  $x\in I_{m,k}$ and $y\in I_{n,\ell}$ with different connected components. 
In this case we introduce the right endpoint $u$ of $I_{m,k}$, the left endpoint $v$ of $I_{n,\ell}$, and we conclude as follows: 
\begin{align*}
\abs{f(x)-f(y)} 
&\le \abs{f(x)-f(u)}+\abs{f(u)-f(v)}+\abs{f(v)-f(y)}\\
&\le c_2\abs{x-u}^{\alpha}+c_1\abs{u-v}^{\alpha}+c_2\abs{v-y}^{\alpha}\\ 
&\le (c_2+c_1+c_2)\abs{x-y}^{\alpha}.\qedhere
\end{align*}
\end{proof}

If $p>q=2$, then $B_{p,q}$ is non-decreasing, continuous, but not absolutely continuous by Propositions \ref{p31} and \ref{p44}.\footnote{We recall that Cantor's ternary function corresponds to the case $p=3$.}
Its arc length cannot be larger than $1+\frac{1}{p-1}$. 

Indeed, if, more generally, $f:[a,b]\to [c,d]$ is a non-decreasing function, then, following \cite{HilleTamarkin1929} for any finite subdivision $a=x_0<\cdots<x_n=b$ we have 
\begin{align*}
\sum_{i=1}^n&\sqrt{(x_i-x_{i-1})^2+(f(x_i)-f(x_{i-1}))^2} \\
&\qquad\le \sum_{i=1}^n\left((x_i-x_{i-1})+(f(x_i)-f(x_{i-1}))\right)  \\
&\qquad=(b-a)+(f(b)-f(a)) \\
&\qquad\le (b-a)+(d-c).
\end{align*}

In fact, for $B_{p,2}$ this maximum is achieved:

\begin{proposition}\label{p46} 
If $p>2$, then $B_{p,2}$ is continuous, non-decreasing and its arc length is equal to $p/(p-1)$.
\end{proposition}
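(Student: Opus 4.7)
The plan is to establish the arc length equals $p/(p-1)$ by showing the upper bound from the inequality just before the proposition is actually attained, using explicit polygonal approximations built from the Cantor-set structure of $J_p$.

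Continuity and monotonicity come for free: by Proposition \ref{p33}(i) the function $B_{p,q}$ is continuous for $p>2$, and by Proposition \ref{p31}(ii) and the affine extension $B_{p,2}$ is non-decreasing. Since $B_{p,2}:[0,1/(p-1)]\to [0,1]$ is non-decreasing, the computation preceding the proposition gives the upper bound $L\le (1/(p-1))+1=p/(p-1)$ for the arc length.

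The key structural fact is that the extension $B_{p,2}$ is \emph{constant} on each removed interval $I_{m,k}$. This follows from formula \eqref{43}: with $q=2$ the derivative $B_{p,q}'$ on each $I_{m,k}$ vanishes because the factor $q-2$ does. (Equivalently, on each $I_{m,k}$ the two endpoints lie in $J_p$ and have the same image under $b_{p,2}$ since $\pi_2(10^\infty)=\pi_2(01^\infty)$, and $B_{p,2}$ is affine between them.) Hence on the removed part of $[0,1/(p-1)]$ the function contributes only \emph{horizontal} length to any polygonal approximation.

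For the lower bound I would work at level $N$ in the Cantor-type construction. Label the $2^N$ closed basic intervals of level $N$ as $K_1<K_2<\cdots<K_{2^N}$, with $K_i=[l_i,r_i]$. Then $r_i-l_i=1/(p^N(p-1))$ and, since $B_{p,2}$ is constant on every $I_{m,k}$ with $m<N$, the value $B_{p,2}(r_i)-B_{p,2}(l_i)=1/2^N$ (each $K_i$ corresponds to a binary address of length $N$, and the image intervals tile $[0,1]$ with length $2^{-N}$). Using the partition consisting of all the endpoints $l_1,r_1,l_2,r_2,\ldots,l_{2^N},r_{2^N}$ (with $l_1=0$, $r_{2^N}=1/(p-1)$), the polygonal length is
\begin{equation*}
L_N=\sum_{i=1}^{2^N}\sqrt{(r_i-l_i)^2+(B_{p,2}(r_i)-B_{p,2}(l_i))^2}+\sum_{i=1}^{2^N-1}(l_{i+1}-r_i),
\end{equation*}
since $B_{p,2}$ is constant on each $[r_i,l_{i+1}]\subseteq [0,1/(p-1)]\setminus J_p$. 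The second sum is the total length of the intervals removed by level $N$, namely $\sum_{m=0}^{N-1}2^m\cdot\frac{p-2}{p^{m+1}(p-1)}=\frac{1}{p-1}\left(1-(2/p)^N\right)$, and the first sum evaluates to $2^N\sqrt{(p^N(p-1))^{-2}+4^{-N}}=\sqrt{(2/p)^{2N}(p-1)^{-2}+1}$. Since $p>2$, letting $N\to\infty$ gives $L_N\to 1+\frac{1}{p-1}=\frac{p}{p-1}$, hence the total arc length is at least $p/(p-1)$. Together with the upper bound this completes the proof.

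The only delicate step is verifying that $B_{p,2}$ is constant on every removed interval $I_{m,k}$, which (depending on taste) one either reads off from \eqref{43} or from the fact that the two boundary points of $I_{m,k}$ lie in $J_p$ and have the same image under $b_{p,2}$; everything else is a geometric series computation.
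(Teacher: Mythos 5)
Your proof is correct and follows essentially the same route as the paper: both establish the upper bound $p/(p-1)$ from the monotonicity inequality preceding the proposition, and both attain it via inscribed polygonal lines through the endpoints of the removed intervals $I_{m,k}$, using that $B_{p,2}$ is constant on each gap so that the gaps contribute only horizontal length, and then evaluating the same geometric series; your level-$N$ partition $L_N$ coincides with the paper's inductively defined $L_n$ after reindexing. The only (harmless) difference is that you make explicit the constancy of $B_{p,2}$ on the $I_{m,k}$, which the paper uses implicitly.
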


\begin{proof}
We already know from Proposition \ref{p44} that $B_{p,2}$ is continuous and non-decreasing.
We approximate the arc length of $B_{p,2}$ by a sequence of polygonal lines as follows. 
We use the intervals $I_{m,k}$ from the preceding proof.

The length of the polygonal line determined by the endpoints of the intervals $[0,1/(p-1)]$ and  $I_{0,1}=[1/(p(p-1)),1/p]$ is equal to
\begin{equation*}
L_1:=\frac{p-2}{p(p-1)}+2\sqrt{\left( \frac{1}{p(p-1)}\right) ^2+\frac{1}{4}}.
\end{equation*}

If we add the four endpoints of the intervals $I_{1,1}$ and $I_{1,2}$, then we obtain the arclength
\begin{equation*}
L_2=\frac{p-2}{p(p-1)}+\frac{2(p-2)}{p^2(p-1)}+4\sqrt{\left( \frac{1}{p^2(p-1)}\right) ^2+\frac{1}{4^2}}.
\end{equation*}

At the next step we add the endpoints of four intervals $I_{2,1},\ldots, I_{2,4}$ to obtain the arclength
\begin{equation*}
L_3=\frac{p-2}{p(p-1)}
+\frac{2(p-2)}{p^2(p-1)}
+\frac{4(p-2)}{p^3(p-1)}
+8\sqrt{\left( \frac{1}{p^3(p-1)}\right) ^2+\frac{1}{4^3}}.
\end{equation*}

Continuing by induction  we obtain 
\begin{equation*}
L_n=2^n\sqrt{\left( \frac{1}{p^n(p-1)}\right) ^2+\frac{1}{4^n}}+\sum_{k=1}^{n}\frac{2^{k-1}(p-2)}{p^k(p-1)}
\end{equation*}
for $n=1,2,\ldots .$

Letting $n\to\infty$ we obtain that the arc length of $B_{p,2}$ is at least
\begin{equation*}
\lim L_n=1+\frac{p-2}{p(p-1)}\sum_{k=1}^{\infty}(2/p)^{k-1}=1+\frac{p-2}{p(p-1)}\cdot \frac{1}{1-(2/p)}=\frac{p}{p-1}.
\end{equation*}
In view of the preceding remark we have equality here.
\end{proof}

\end{document}